\definecolor{refkey}{gray}{.75}
\def\p{\partial}
\def\bs{\boldsymbol}
\def\nb{\mathbf{n}}
\def\ub{\bs{u}}
\def\C{\mathcal{C}}
\def\D{\mathcal{D}}
\def\M{\mathcal{M}}
\def\lpm{{L^p(\M)}}
\def\linfm{L^\infty{\M}}
\def\l2m{L^2(\M)}
\def\w1pm{W^{1,p}(\M)}
\def\h10m{H^1_0(\M)}
\numberwithin{theorem}{section}
\newcommand{\TheTitle}{The barotropic quasi-geostrophic equation under
  a free surface} 
\newcommand{\TheAuthors}{Qingshan Chen}
\headers{\TheTitle}{\TheAuthors}
\title{{\TheTitle}\thanks{Submitted to the editors DATE.
\funding{This work was in part supported by the Simons Foundation
  contract no.~\#319070.}}} 
\author{
  Qingshan Chen\thanks{Department of Mathematical Sciences, Clemson
    University, Clemson, SC 29631.
    (\email{qsc@clemson.edu}, \url{http://discreteinfo.net}).}
}
\begin{document}
\maketitle

% REQUIRED
\begin{abstract}
The inviscid barotropic quasi-geostrophic equation with a free surface
is considered. The free surface mandates a non-standard boundary
condition. The global existence existence and uniqueness of a weak
solution is established, thanks to the uniform in time bounds on the
potential vorticity. The solution is also shown to satisfy the initial
and boundary conditions in the classical sense. 
\end{abstract}

% REQUIRED
\begin{keywords}
Partial differential equations, inviscid model, geophysical fluid
dynamics, global well-posedness
\end{keywords}

% REQUIRED
\begin{AMS}
35Q35, 35Q86
\end{AMS}

\section{Introduction}

The inviscid barotropic quasi-geostrophic equation (QG) for large-scale
geophysical flows takes the form of a  scalar transport
equation,
\begin{equation}
  \label{eq:1}
  \dfrac{\p}{\p t} q + \ub \cdot\nabla q = f,\qquad \M.
\end{equation}
Here,
\begin{equation}
  \label{eq:2}
  q = \nabla\times\ub + y -\psi
\end{equation}
is the QG potential vorticity (QGPV), and the transport velocity $\ub$
is given by
\begin{equation}
  \label{eq:3}
  \ub = \nabla^\perp\psi.
\end{equation}
% The right-hand side forcing $f$ is the curl of a vector field
% $\bs{F}$,
The streamfunction $\psi$ physically represents small perturbations to
the surface height. The equation is posed on a simply-connected
two-dimensional domain $\M$. 
Within the expression \eqref{eq:2} for the QGPV $q$, $\nabla\times\ub$ is
the relative vorticity of the velocity field, $y$ the so-called beta
term that arises thanks to the differential effect of the earth
rotation along the meridional direction, and $\psi$ the surface
deformation.

% The velocity field $\ub$, defined in \eqref{eq:3}, is actually the
% leading component of the full two-dimensional velocity field, and is
% incompressible. The compressibility appears only in the higher-order
% components. 

For the simplicity of presentation, all the variables in
\eqref{eq:1}-\eqref{eq:3} have been non-dimensionalized, and the
dimensionless coefficients have been rounded to the constant 1,
mandating that each term be of equal significance to the dynamics. In
reality, of course, the situation is much more complex. We point out
that, when the horizontal length scale is much smaller than the Rossby
deformation radius, the fluctuations of the top surface are small and
their impact 
on the vorticity dynamics is negligible, i.e.~the classical ``rigid
lid'' assumption holds. % In this case, the QG reduces to the vorticity
% form of the two-dimensional incompressible Euler equation; the beta
% term will only show up as a benigh low-order term.
However, in
non-homogeneous fluids, since the horizontal length scale of the flow
are close in scale to the Rossby deformation radii of the interior
layer interfaces, the deformations of the interior layer interface are
greater, and so are their impact on the vorticity dynamics. For this
reason, we want to study the well-posedness of the QG equation when
the surface deformation is included. The current work can be
considered a preparation for future efforts on more complex systems. 

Equation \eqref{eq:1} implies that, in the absence of an
external forcing, i.e.~$f=0$, the QGPV is conserved along the fluid
paths. There is, of course, an analogue between the QG equation and
the two-dimensional incompressible Euler equation, where the vorticity
(called the relative vorticity in the above) is conserved along the
fluid paths. The main difference between them is the appearance of the
stream-function $\psi$ in the QGPV, which reflects the fact that the
QG actually describes a three-dimensional body of fluids, whose top
surface is free to deform, while the two-dimensional Euler is strictly
for the planar fluids. But the motions of the three-dimensional body
of fluid are assumed to be uniform across the fluid depth, and
therefore the velocity field $\ub$ is two-dimensional. The third
dimension only manifests itself through the varying fluid depth. A
review of the derivation of the QG (see e.g.~\cite{Pedlosky1987-gk,
  Vallis2006-jj}) reveals that the surface fluctuations affects the
vorticity dynamics thanks to the mild compressibility of the
full two-dimensional velocity field; the $\ub$ in
\eqref{eq:1}, which is incompressible, is the leading component in the asymptotic
expansion of the full velocity
field. Hence, the surface fluctuation $\psi$ 
in \eqref{eq:2} is a manifestation of the additional variability in
the vertical dimension and the mild compressibility of the
two-dimensional velocity field. 

In a broader context, many geophysical models are derived based on the
characteristics of large-scale geophysical flows, such as the small
vertical to horizontal aspect ratio and the strong earth rotation. In 
most cases, the models demonstrate richer and more complex dynamics
than the strictly two-dimensional models, but remain much simpler than
the full three-dimensional fluid models, i.e.~the three-dimensional
Navier-Stokes equations (NSE) or the Euler equations. Mathematically
speaking, the two-dimensional fluid models, i.e.~the two-dimensional
Navier-Stokes or Euler equations, are well understood. The existence,
uniqueness, and regularity of a global solution to these equations are
known (\cite{Yudovich1963-bj, Kato1967-cj, Bardos1972-ar}). But the
same cannot be said about the three-dimensional NS or Euler equations;
for a review of the limited results on these equations, see
\cite{Temam2001-th, Bardos2007-cf}. Situated between the purely
two-dimensional fluid models and the full three-dimensional models,
large-scale geophysical flow models, such as the primitive equations
(PEs), the shallow water equations, and the quasi-geostrophic
equations, can offer valuable insights into the complex dynamics of
fluid flows, and help bridge the knowledge gap between two-dimensional and
three-dimensional fluid models. 

Partly for the reason mentioned above, and partly for the practical
interests in the evolution of large-scale geophysical flows,
geophysical fluid models have been the subject of intense effort in
the mathematical community for the past few decades. Lions, Temam, and
Wang (\cite{Lions1995-zh, Lions1993-io, Lions1993-sx, Lions1992-ur,
  Lions1992-zo}) offered the first systematic and rigorous treatment
of the three-dimensional viscous primitive equations. Their results
were followed and improved by many subsequent works; for a review of
these progresses, see the review article \cite{Petcu2009-rs}. In
particular, Cao and Titi (\cite{Cao2007-ov}) and Kobelkov
(\cite{Kobelkov2007-vy, Kobelkov2006-lw}) independently established
the global existence and uniqueness of a strong solution to the
three-dimensional viscous PEs under the rigid-lid assumption. 

On the side of quasi-geostrophic equations, several authors have
studied the three-dimensional QG equation under idealized settings, in
the unbounded half space, or a rectangular box. An early work
is by Dutton (\cite{Dutton1974-oo, Dutton1976-kh}), who considered the
three-dimensional QG model in a rectangular box with periodic boundary
conditions on the sides, and homogeneous Neumann boundary conditions
on the top and bottom. The uniqueness of a classical solution, if it
exists, and the global existence of a generalized solution were
established. Bourgeois and Beale (\cite{Bourgeois1994-tv}) studied the
equation in a similar setting, and the existence of a global strong
solution was proved. Desjardins and Greneier
(\cite{Desjardins1998-hb}) also considered the equation in a similar
setting, but included in their model the Ekman pumping effect which
effectively add diffusion to the flow. The existence of a global weak
solution is given. Puel and Vasseur (\cite{Puel2015-mw}) considered
the inviscid QG in the upper half space, with the non-penetration
boundary condition at the bottom of the fluid. The global existence of
a weak solution was proven.  In these works, the issue of uniqueness
of the solutions was left open. In a recent work, Novack and Vasseur
(\cite{Novack2016-op}) considered the three-dimensional QG in the same
spatial setting as in \cite{Puel2015-mw}, but with an added diffusion
term in the boundary at $z=0$ due to the Ekman pumping effect. The
existence and uniqueness of a global strong solution is proven. 

Another related model is the surface QG equation (SQG). The SQG is in
fact a generalization of the top surface boundary condition for the
three-dimensional QG. The curl form of the SQG resembles that of the
three-dimensional Euler equations. For this reason, the SQG has been
intensely studied in the past twenty years or so
(\cite{Constantin1994-me, Constantin1994-gm, Held1995-aa,
  Constantin1998-az, Constantin1999-yi,Li2008-hv,  
  Caffarelli2010-iq, Kiselev2010-ka,
  Friedlander2011-wb, Chae2012-sv, Dabkowski2012-zu,    Miao2012-an, 
  Lazar2013-wf, Fefferman2015-ww, 
   Cheskidov2015-rj, Kukavica2015-wh,
   Castro2016-gw, Buckmaster2016-gw}). 

The current work studies the inviscid barotropic QG equation under a
free surface and on a general bounded domain. Without the free
surface, the barotropic QG is 
mathematically equivalent to the two-dimensional incompressible Euler
equation, whose well-posedness has been established by various authors
(\cite{Yudovich1963-bj, Kato1967-cj}). The introduction of the the
free surface not only changes the relation between the potential
vorticity $q$ and the streamfunction $\psi$, but also mandates a new
and slightly more complicated type of boundary condition for the stream
function. % Specifically, the homogeneous Dirichlet boundary condition
% for the streamfunction suffices in the case of the two-dimensional Euler
% equations.
For the two-dimensional Euler equation, the homogeneous Dirichlet
boundary conditions suffice for the streamfunction. But this is no
longer true when the top surface is left free.
The non-penetration boundary condition on the flow mandates that
the streamfunction be constant along the boundary. 
% That free constant
% cannot be set arbitrarily, and has to be determined by imposing
% additional constraints.
Physically, the constant boundary value of the streamfunction should
be left free to accommodate the free deformation of the top
surface. Mathematically, that constant boundary value cannot be
arbitrarily set without altering the shape of the solution, unlike in
the case of the two-dimensional Euler equations. Thus, additional
constraints have to be introduced to determine the value of the
streamfunction on the boundary.
In this work, we determine the constant by
enforcing the mass conservation condition. % Under this setting, we show that
% the barotropic QG equation is well-posed using an approach similar to
% that of Yudovich (\cite{Yudovich1963-bj}). 

The constant but non-zero boundary condition gives rise to several
technical difficulties that were absent in the case of the
two-dimensional Euler equation. The main contribution of this work is
to address these difficulties and establish the well-posedness of the
barotropic QG equation with a free surface. The proof follows the
approach that was originally laid down by Yudovich
(\cite{Yudovich1963-bj}). However, for the construction of the flow
map, the approach from Marchioro (\cite{Marchioro1994-yt}) is
adopted. The simpler 
approach of Kato (\cite{Kato1967-cj}) does not apply because the solution of the
current problem is not sufficiently smooth.

\section{The initial and boundary conditions}
It will become clear later in the analysis that the streamfunction
$\psi$ is a key quantity in the QG dynamics. In fact, the QG equation
can be expressed entirely in this quantity,
\begin{equation}
  \label{eq:4}
  \dfrac{\p}{\p t}(\Delta\psi +y -\psi) +\nabla^\perp\psi\cdot
  \nabla(\Delta\psi + y - \psi) = f,\quad \M.
\end{equation}

Since the model is inviscid, it is natural to impose the no-flux boundary
condition on the domain boundary $\p\M$, 
\begin{equation}
  \label{eq:5}
  \ub\cdot\nb = -\dfrac{\p\psi}{\p\tau} = 0,\qquad \p\M,
\end{equation}
where $\nb$ and $\tau$ stand for the outer normal and positively
oriented tangential vectors, respectively, on the boundary. The
condition \eqref{eq:5} is equivalent to the requirement that $\psi$ be
constant along the boundary, i.e.~for some quantity $l$ that depends
on $t$ only,
\begin{equation}
  \label{eq:6}
  \psi(x, t) = l(t),\qquad\forall\, x\in\p\M.
\end{equation}
% The condition above will not be enough to uniquely determine the
% streamfunction $\psi$. We are also prohibited from imposing a fixed
% value for $\psi$ on the boundary, due to the presence of $\psi$ in the
% potential vorticity. Instead,
The boundary value of $\psi$ has been left free to accommodate the
free movement of the top surface. In order to determine the value $l$
for each $t$,
we require that the fluctuation of the
surface does not affect the overall volume of the fluid, that
is,
\begin{equation}
  \label{eq:7}
  \int_\M \psi dx = 0.
\end{equation}
This is equivalent to the condition that mass is conserved.

The initial condition is specified on the streamfunction as well,
\begin{equation}
  \label{eq:8}
  \psi(x,0) = \psi_0(x),\qquad \forall\,x\in\M.
\end{equation}

Thus, equations \eqref{eq:1}--\eqref{eq:3} and \eqref{eq:6}--\eqref{eq:8}
(or, equivalently, \eqref{eq:4} and \eqref{eq:6}--\eqref{eq:8})
constitute the complete initial and boundary value problem of the
barotropic QG equation.

In the QG equation, the right-hand side forcing $f$ is typically the
curl of a vector field $\bs{F}$, representing, e.g., the wind in the
physical world. Hence, we assume that $f$ is derived from a given
vector field $\bs{F}$ via
\begin{equation}
  \label{eq:forcing}
  f = \nabla\times\bs{F}.
\end{equation}

\section{An non-standard elliptic boundary value
  problem}\label{sec:bvp} 
For regularity, we assume that the boundary of the domain, $\p \M$, is
at least 
$C^2$ smooth. 

Once the QGPV $q$ is known, the streamfunction $\psi$ can be
determined from an non-standard elliptic boundary value problem,
\begin{subequations}
  \label{eq:9}
  \begin{align}
    \Delta \psi - \psi &= q - y, & \M&,\label{eq:9a}\\
    \psi &= l, & \p\M&,\label{eq:9b}\\
    \int_\M\psi dx &= 0. & &\label{eq:9c}
  \end{align}
\end{subequations}
% The constant value $l$ of $\psi$ on the boundary is unknown, and it
% cannot be arbitrarily 
% set because of the zero order term in the equation
% \eqref{eq:9a}.
We proceed by decomposition. This technique can be
applied in more complex situations with holes inside the domain (see
\cite{Kato1967-cj}). We let $\psi_1$ and $\psi_2$ be solutions of the
following elliptic BVPs, respectively,
\begin{subequations}
  \label{eq:10}
  \begin{align}
    \Delta \psi_1 - \psi_1 &= q - y, & \M&,\label{eq:10a}\\
    \psi_1 &= 0, & \p\M&,\label{eq:10b}
  \end{align}
\end{subequations}
and
\begin{subequations}
  \label{eq:11}
  \begin{align}
    \Delta \psi_2 - \psi_2 &= 0, & \M&,\label{eq:11a}\\
    \psi_2 &= 1, & \p\M&.\label{eq:11b}
  \end{align}
\end{subequations}
By the standard elliptic PDE theories, both BVPs \eqref{eq:10} and
\eqref{eq:11} are well-posed under proper assumptions on the forcing
on the right-hand side of \eqref{eq:10a} and on the domain $\M$.
The solution to the original BVP \eqref{eq:9} can be expressed in terms
of $\psi_1$ and $\psi_2$,
\begin{equation}
  \label{eq:12}
  \psi = \psi_1 + l\psi_2.
\end{equation}
The unknown constant $l$ can be determined using the mass conservation
constraint
\eqref{eq:9c}
\begin{equation*}
  \int_\M \psi dx = \int_\M \psi_1 dx + l\int_\M \psi_2 dx =
  0,
\end{equation*}
which leads to 
\begin{equation}\label{eq:13}
l = -\dfrac{\displaystyle\int_\M \psi_1 dx}{\displaystyle\int_\M \psi_2 dx}. 
\end{equation}
We point out that the expression \eqref{eq:13} for $l$ is valid
because, as a consequence of the maximum principle, $\psi_2$ 
is positive in the interior of the domain, and the integral of $\psi_2$
in the denominator of \eqref{eq:13} is strictly positive.

The elliptic PDE \eqref{eq:9a} is called the Helmholtz equation. The
associated differential operator, $\Delta - I$, has a fundamental
solution that is all regular except for a logarithmic singularity 
%of the form $\ln|x-x_0|$ 
(\cite{Courant1989-gn}), just like the Laplacian operator
$\Delta$. Thus, provided that the boundary $\p\M$ is sufficiently
smooth, the Green's function $G(x,y)$ for the elliptic BVP
\eqref{eq:10} exists, and is smooth except for a logarithmic
singularity. Specifically, $G(x,y)$ has the following forms and
estimates, 
for $\forall$ $x,\,y\in\M$,
\begin{subequations}
  \label{eq:13a}
  \begin{align}
    &G(x,y) = a(x,y) \ln |x-y| + b(x,y), \label{eq:13ac}\\ 
  &\dfrac{\p G}{\p x_i} (x,y) = \dfrac{c(x,y)}{|x-y|} +
                               d(x,y),\label{eq:13ad}\\ 
    &|G(x,y)| \leq
    C(\M)\left(1+\left|\ln|x-y|\right|\right),\label{eq:13aa}\\
    &\left|\dfrac{\p G}{\p x_i}(x,y)\right| \leq C(\M)(1+|x-y|^{-1}),\qquad 
    i=1,\,2. \label{eq:13ab} 
  \end{align}
\end{subequations}
In the above, $a(x,y)$, $b(x,y)$, $c(x,y)$, and $d(x,y)$ are functions
that are regular over the entire domain $\M$, and whose maximum values
depend on $\M$ only.

Using the Green's function $G(x,y)$, the solution $\psi_1$ of
\eqref{eq:10} can be written as  
\begin{equation}
  \label{eq:13b}
  \psi_1(x) = \int_\M G(x,y)(q(y) - y_2)dy. 
\end{equation}
The solution $\psi_2$ of \eqref{eq:11} can be written as 
\begin{equation}
  \label{eq:13c}
  \psi_2(x) = 1+ \int_\M G(x,y)dy. 
\end{equation}
 Substituting \eqref{eq:13b} and \eqref{eq:13c} into \eqref{eq:13}, we
 obtain an expression for $l$, as a functional of the QGPV $q$,
 \begin{equation}
   \label{eq:13d}
   l(q) = -\dfrac{\displaystyle\int_\M \int_\M G(x,y)(q(y)-y_2)
     dydx}{\displaystyle|\M| + \int_\M \int_\M G(x,y)dydx}.
 \end{equation}
The solution  $\psi$ to the non-standard BVP \eqref{eq:9} can be
expressed as 
\begin{equation}
  \label{eq:13e}
  \psi(x) = \int_\M G(x,y)(q(y)-y_2) dy + l (1 + \int_\M G(x,y)dy).
\end{equation}

As in the case of the two-dimensional Euler equation, the QGPV is
simply being advected by the velocity field, and its maximum values
are preserved in the absence of an external forcing. % For this reason,
% $L^\infty(\M)$, or $L^\infty(0,T;\M)$ when time is involved, seems to
% be a natural choice for the potential vorticity $q$.
As noted above on \eqref{eq:13}, the denominator on the right-hand
side of \eqref{eq:13d}  is strictly positive. Thus, from
\eqref{eq:13d}, and making use of \eqref{eq:13aa}, one derives an
estimate on the magnitude of the constant value $l$ of $\psi$ on the
boundary, 
\begin{align*}
  |l| % &\leq C(\M, |q|_\infty)\int_\M \int_\M (1+ |\ln |x-y||) dy
   %  dx\\
   % &\leq C(\M, |q|_\infty) + C(\M, |q|_\infty)\int_\M \int_\M
   %   \left|\ln|x-y|\right| dy dx\\
   &\leq  C(\M, |q|_\infty)\left(1+\int_\M \int_\M
     \left|\ln|x-y|\right| dy dx\right)\\
   &\leq  C(\M, |q|_\infty)\left(1+\int_\M
     \left(\int_{\substack{|y-x|\ge 1\\y\in\M}} 
     \left|\ln|x-y|\right| dy + \int_{\substack{|y-x|\le 1\\y\in\M}} 
     \left|\ln|x-y|\right| dy\right) dx \right)\\
   &\leq  C(\M, |q|_\infty)\left(1+\int_\M
     \int_{\substack{|y-x|\le 1\\y\in\M}} 
     \left|\ln|x-y|\right| dy dx \right)\\
   &\leq  C(\M, |q|_\infty)\left(1+\int_\M
     \int_{|y-x|\le 1} 
     \left|\ln|x-y|\right| dy dx \right)\\
   &\leq  C(\M, |q|_\infty)\left(1+\int_\M
     \int_0^1 2\pi r 
     \left|\ln r|\right| dr dx \right)\\
  &\leq  C(\M, |q|_\infty)\left(1+2\pi |\M|
     \int_0^1 
     r \left|\ln r\right|dr  \right)\\
  &\leq  C(\M, |q|_\infty)\left(1+\dfrac{2\pi |\M|}{e}
       \right).
\end{align*}
To summarize, we have just shown that the value of $\psi$ on the
boundary is bounded by a constant that depends on the
domain and the maximum norm of the QGPV $q$, that is,
\begin{equation}
  \label{eq:13f}
  |l| \leq C(\M, |q|_\infty).
\end{equation}
% In the above, $C(\M, |q|_\infty)$ represents generic constants that
% depend on the domain $\M$ and the maximum value of the QGPV $q$. 

Below, we shall formally state the regularity results for the elliptic
boundary value problem \eqref{eq:9}. But, in order to do so, we need
to first give the precise definitions of some relevant function
spaces. 

We denote by $Q_T$ the spatial-temporal domain,
\begin{equation*}
  Q_T = \M\times (0,T).
\end{equation*}
We denote by $L^\infty(\M)$, or $L^\infty(Q_T)$ when time is also
involved, the space of functions that are essentially bounded. 
We denote by $C^{0,\gamma}(\M)$, with $\gamma>0$, the space of
H\"older-continuous functions on $\M$, and similarly,
$C^{0,\gamma}(Q_T)$ on $Q_T$. $C^{0,\gamma}(\M)$ and
$C^{0,\gamma}(Q_T)$ are both Banach spaces under the usual H\"older
norms. 

We denote by $V$ the space of solutions to the elliptic boundary value problem \eqref{eq:9} with $q\in L^\infty(\M)$, i.e.,
\begin{equation*}
  V := \left\{ \psi\,|\, \psi \textrm{ solves } \eqref{eq:9} \textrm{
      for some }  q\in L^\infty(\M)\right\}.
\end{equation*}
The space $V$ is equipped with the norm
\begin{equation*}
  \| \psi\| _V := \| \Delta\psi -\psi\|_{L^\infty(\M)}.
\end{equation*}
By the continuity of the inverse elliptic operator $(\Delta -I)^{-1}$,
$V$ is a Banach space.

In the analysis, we will also encounter functions that are
differentiable with continuous first derivatives. The space of these
functions will be denoted as $C^1(\M)$, equipped with the usual $C^1$
norm. 

When time is involved, we use $L^\infty(0,T; V)$ to designate the
space of functions that are \emph{essentially} bounded with respect to
the $\| \cdot\|_V$ norm, and $L^\infty(0,T; C^1(\M))$ for
functions that are \emph{essentially} bounded under the
$\|\cdot\|_{C^1(\M)}$ norm. 

We can now formally state the regularity result for the elliptic
boundary value problem \eqref{eq:9}.
\begin{lemma}\label{lem:elliptic-reg}
  Let the boundary $\p\M\in C^2$, $q\in L^\infty(\M)$. Then the
  solution $\psi$ belongs to $W^{2,p}(\M)$ for any $p>1$, and
  the derivatives enjoy the following estimates,
  \begin{equation}
    \label{eq:14}
    \| D^2\psi\|_{L^p(\M)} \leq C p \| q-y \|_{L^\infty(\M)},
  \end{equation}
where $D$ denotes the first-order differential operator, and the
constant $C$ depends on $\M$ only, and not on $p$ or the potential
vorticity $q$. The first derivatives of the function $\psi$ satisfy
the H\"older condition with any $0<\gamma < 1$,
\begin{equation}
  \label{eq:15}
  \| D\psi\|_{C^{0,\gamma}(\M)} \leq \dfrac{C}{1-\gamma}
  \| q-y\|_{L^\infty(\M)},
\end{equation}
and the quasi-Lipschitz condition,
\begin{equation}
  \label{eq:16}
  |D\psi(\xi) - D\psi(\eta) | \leq C\chi(\delta) \| q-y\|_{L^\infty(\M)},
\end{equation}
where $\delta = |\xi - \eta|$, and 
\begin{equation*}
  \chi(\delta) = \left\{
    \begin{aligned}
      &(1-\ln\delta)\delta & &\textrm{if } \delta < 1,\\
      &1 & &\textrm{if } \delta \ge 1.
    \end{aligned}\right.
\end{equation*}
\end{lemma}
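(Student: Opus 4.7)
The plan is to use the decomposition $\psi = \psi_1 + l\psi_2$ from Section \ref{sec:bvp}, reducing the question to the regularity of $\psi_1$, which solves a homogeneous Dirichlet problem with an $L^\infty$ right-hand side. The function $\psi_2$ depends only on $\M$ and is smooth up to the boundary by classical theory applied to \eqref{eq:11}, while $|l|$ is controlled linearly in $\|q-y\|_{L^\infty(\M)}$ by \eqref{eq:13f}; so the correction $l\psi_2$ contributes to each of \eqref{eq:14}--\eqref{eq:16} only a term that can be absorbed into the right-hand side with a constant depending on $\M$ alone.

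For \eqref{eq:14} I would first apply the maximum principle to \eqref{eq:10} to obtain $\|\psi_1\|_{L^\infty(\M)} \leq \|q-y\|_{L^\infty(\M)}$, and then rewrite \eqref{eq:10a} as $\Delta\psi_1 = (q-y)+\psi_1 \in L^\infty(\M)$. The classical Calder\'on--Zygmund $W^{2,p}$ estimate for the Dirichlet Laplacian on a $C^2$ domain yields
\[
 \|D^2\psi_1\|_{L^p(\M)} \leq C(p)\,\|(q-y)+\psi_1\|_{L^p(\M)} \leq C(p)\,|\M|^{1/p}\,\|q-y\|_{L^\infty(\M)}.
\]
The decisive input is that the $L^p$ operator norm of the second-order Riesz transforms on $\mathbb{R}^2$ grows linearly in $p$ as $p\to\infty$, a classical consequence of interpolating the weak-type $(1,1)$ bound with the $L^2$ bound; this linear growth transfers to the bounded $C^2$ domain and produces the factor $Cp$.

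For \eqref{eq:15}, I would combine the preceding $W^{2,p}$ bound with the Morrey embedding $W^{2,p}(\M) \hookrightarrow C^{1,1-2/p}(\M)$, whose embedding constant on a Lipschitz planar domain stays bounded as $p\to\infty$. Choosing $p = 2/(1-\gamma)$ then produces \eqref{eq:15} with the stated factor $C/(1-\gamma)$.

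The main obstacle is the quasi-Lipschitz bound \eqref{eq:16}, which is finer than Morrey embedding can supply and must be extracted directly from the representation \eqref{eq:13b}. Set $\delta = |\xi-\eta|$ and first assume $\delta < 1/2$. I would split
\[
 D\psi_1(\xi)-D\psi_1(\eta) = \biggl(\int_A + \int_B\biggr)\bigl(DG(\xi,y)-DG(\eta,y)\bigr)(q(y)-y_2)\,dy,
\]
with $A = \{y\in\M : |y-\xi|\leq 2\delta\}$ and $B = \M\setminus A$. On $A$, the bound \eqref{eq:13ab} applied to each of $DG(\xi,\cdot)$ and $DG(\eta,\cdot)$, together with the observation that $A$ is contained in planar disks of radius $O(\delta)$ about each of $\xi$ and $\eta$, yields $O(\delta\,\|q-y\|_{L^\infty})$ after integrating $r^{-1}$ in polar coordinates. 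On $B$, the structure \eqref{eq:13ad} (equivalently, the known behavior of the second derivatives of the Helmholtz fundamental solution) gives $|D^2 G(z,y)| \leq C|z-y|^{-2}$ uniformly along the segment joining $\xi$ and $\eta$, and since $y \in B$ implies $|y-z| \geq \tfrac12|y-\xi|$ on that segment, the mean value theorem yields $|DG(\xi,y) - DG(\eta,y)| \leq C\delta\,|y-\xi|^{-2}$; integrating over $2\delta \leq |y-\xi| \leq \operatorname{diam}(\M)$ produces $C\delta(1+|\ln\delta|)$, matching $C\chi(\delta)$. For $\delta \geq 1/2$ the estimate reduces to a uniform $L^\infty$ bound on $D\psi$ furnished by \eqref{eq:15}, and assembling the pieces together with the $l\psi_2$ contribution completes the proof.
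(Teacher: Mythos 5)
Your proposal is correct and closely parallels the paper for the core estimate \eqref{eq:16}: both arguments differentiate the Green's function representation \eqref{eq:13e}, split $\M$ into a small disk of radius $2\delta$ about $\xi$ and the complementary far region, bound the near-field contribution using \eqref{eq:13ab}, and bound the far-field contribution by a mean-value argument integrated in polar coordinates. The one technical divergence there is that you invoke a pointwise bound $|D^2G(z,y)|\leq C|z-y|^{-2}$ for the Helmholtz Green's function, whereas the paper avoids second derivatives of $G$ entirely by exploiting the structure \eqref{eq:13ad} together with the algebraic inequality \eqref{eq:14f} applied to the $|y-x|^{-1}$ piece. Your $D^2G$ bound is a standard fact for $C^2$ domains, but it is an extra input the paper does not need to assume; if you want to stay strictly within what \eqref{eq:13a} provides, the paper's route via \eqref{eq:14f} is more economical.

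Where you genuinely diverge from the paper is in \eqref{eq:15}. The paper obtains the H\"older bound as a corollary of \eqref{eq:16}, by maximizing $|\ln\delta|\,\delta^{1-\gamma}$ over $\delta\in(0,1)$, which gives the $\frac{e^{-1}}{1-\gamma}$ constant directly. You instead pass through \eqref{eq:14} and the Morrey embedding $W^{2,p}\hookrightarrow C^{1,1-2/p}$ with $p=2/(1-\gamma)$. Both routes are valid and land on the same $\frac{C}{1-\gamma}$ dependence, but the paper's is more self-contained (it reuses the Green's-function estimate already established for \eqref{eq:16}), while yours leans on the quantitative behavior of the Morrey constant. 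Note that your claim that the Morrey constant stays bounded addresses $p\to\infty$ (i.e.\ $\gamma\to 1^-$), which is the side that matters for the $\frac{1}{1-\gamma}$ factor; for $\gamma$ near $0$, where $p\to 2^+$ and the Morrey constant degenerates, you should either observe that $\|D\psi\|_{C^{0,\gamma}}\leq C(\M)\|D\psi\|_{C^{0,1/2}}$ for $\gamma\leq 1/2$ on a bounded domain, or simply fix $p=4$ in that range; since $\frac{1}{1-\gamma}\geq 1$, the asserted bound still holds. Finally, for \eqref{eq:14} the paper simply cites the classical $L^p$ theory; your sketch via the maximum principle plus linear growth of the Calder\'on--Zygmund constant is a reasonable unpacking of that citation and introduces no gap.
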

This result is similar to the one given in \cite{Yudovich1963-bj} for
the Euler equation. What is new here is the presence of a free surface
and its constant value on the boundary.

\begin{proof}[Proof of Lemma \ref{lem:elliptic-reg}]
The assertion \eqref{eq:14} is part of the classical $L^p$ regularity
theory for the elliptic BVPs (\cite{Gilbarg1983-pq}). We now verify
that $\psi\in C^1(\M)$ and the H\"older condition \eqref{eq:15} and
the quasi-Lipschitz condition \eqref{eq:16} hold for $\psi$. We
formally differentiate \eqref{eq:13e} with respect to $x_i$,
\begin{equation}
  \label{eq:14a}
    \dfrac{\p\psi}{\p x_i}(x) = \int_\M \dfrac{\p G}{\p
      x_i}(x,y)(q(y)-y_2) dy + l\int_\M \dfrac{\p G}{\p x_i}(x,y)dy.
\end{equation}
We call the right-hand side $u(x)$, and we need to show that $u(x)$ is
well-defined. Using the fact that $q$ is essentially bounded, we find
that 
\begin{align*}
  |u(x)| &\leq \int_\M \left|\dfrac{\p G}{\p x_i}(x,y) \right|\cdot
           |q(y) - y_2| dy + l \int_\M \left|\dfrac{\p G}{\p
           x_i}(x,y)\right| dy \\
  &\leq \left(|q|_\infty + |y_2|_\infty + |l|\right)\int_\M \left|\dfrac{\p G}{\p
           x_i}(x,y)\right| dy.
\end{align*}
Using the estimate \eqref{eq:13ab} in the above, we proceed with the estimates,
\begin{align*}
 |u(x)|  &\leq \left(|q|_\infty + |y_2|_\infty + |l|\right)\int_\M
    \dfrac{1}{|y-x|} dy\\
  & \leq \left(|q|_\infty + |y_2|_\infty + |l|\right) \left\{
    \int_{\substack{|y-x|\ge 1\\y\in \M}} \dfrac{1}{|y-x|} + \int_{|y-x|\leq 1}
  \dfrac{1}{|y-x} dy\right\}\\
  & \leq \left(|q|_\infty + |y_2|_\infty + |l|\right) \left\{
    |\M|\cdot 1 + \int_0^1\int_{|y-x|=r}
  \dfrac{1}{r} ds dr \right\}\\
  & \leq \left(|q|_\infty + |y_2|_\infty + |l|\right)\cdot (|\M| + 2\pi).
\end{align*}
Combining this estimate with the estimate \eqref{eq:13f}, we reach
\begin{equation}
  \label{eq:14b}
    |u(x)| \leq C(\M, |q|_\infty).
\end{equation}
Hence, the right-hand side of \eqref{eq:14a} is well-defined, and the
relation \eqref{eq:14a} holds. 

Next, we show that the first derivative of $\psi$ is quasi-Lipschitz
continuous and satisfies the estimates \eqref{eq:16}. We let $\xi$
and $\eta$ be two arbitrary points in $\M$. Then, using the relation
\eqref{eq:14a}, we derive that 
\begin{equation}
  \label{eq:14c}
  \left| \dfrac{\p\psi}{\p x_i}(\xi) - \dfrac{\p\psi}{\p x_i}(\eta)
  \right| \leq \left(|q-y_2|_\infty + |l|\right) \int_\M
  \left|\dfrac{\p G}{\p x_i}(\xi, y) - \dfrac{\p G}{\p
      x_i}(\eta, y)\right| dy.
\end{equation}
% We note that $\p G(x,y)/\p x_i$ has a singularity of the form of
% $1/|x-y|$ near $x$. Thus, it can be written as
% \begin{equation}
%   \label{eq:14d}
%   \dfrac{\p G}{\p x_i} (x,y) = \dfrac{g(x,y)}{|x-y|} + h(x,y)
% \end{equation}
% for some functions $p(x,y)$ and $q(x,y)$ that are regular in the
% entire domain $\M$, and whose maximum values depend on $\M$ only. 
Substituting the form \eqref{eq:13ad} into \eqref{eq:14c}, and using the
estimate \eqref{eq:13f} on $l$, one derives that 
\begin{equation}
  \label{eq:14e}
  \left| \dfrac{\p\psi}{\p x_i}(\xi) - \dfrac{\p\psi}{\p x_i}(\eta)
  \right| \leq C(\M, |q|_\infty) \left(|\xi - \eta| + \int_\M
    \left|\dfrac{1}{|y-\xi|} - \dfrac{1}{|y-\eta|}\right| dy\right).
\end{equation}
Using the triangular inequality, one finds that, in a space of general
dimension $n$, 
\begin{equation}
  \label{eq:14f}
  \left| \dfrac{1}{|y-\xi|^{n-1}} - \dfrac{1}{|y-\eta|^{n-1}}\right|
  \leq \dfrac{|\xi-\eta|}{|y-\xi^\ast(y)|^n},
\end{equation}
where $\xi^\ast(y)$ is a point between $\xi$ and $\eta$, and depends on
$y$. We let $R>0$ be such that $B(\xi, R)$, the ball centered
at $\xi$ with radius $R$, covers the entire domain $\M$. 
%Then, by \eqref{eq:14f}, we derive that
We then decompose the domain into two parts, one within the small ball
$B(\xi,2\delta)$, and the other within the annulus $2\delta\leq
|y-\xi| \leq R$, where $\delta = |\xi-\|$. We estimate the integral of
the left-hand side of \eqref{eq:14f} in these two sub-domains
separately. 
\begin{align*}
  &\int_\M
    \left|\dfrac{1}{|y-\xi|^{n-1}} - \dfrac{1}{|y-\eta|^{n-1}}\right| dy \\
\leq& \int_{|y-\xi|\leq 2\delta}
    \left|\dfrac{1}{|y-\xi|^{n-1}} - \dfrac{1}{|y-\eta|^{n-1}}\right| dy +
  \int_{2\delta < |y-\xi| < R}
      \left|\dfrac{|\xi-\eta|}{|y-\xi|^{n-1}} - \dfrac{1}{|y-\eta|^{n-1}}\right| dy\\
 \leq& \int_{|y-\xi|\leq 2\delta}
     \dfrac{1}{|y-\xi|^{n-1}}dy +\int_{|y-\eta|\leq 3\delta} \dfrac{1}{|y-\eta|^{n-1}} dy +
 \int_{2\delta < |y-\xi| < R}
       |\dfrac{|\xi-\eta|}{|y-\xi|^n}\dfrac{|y-\xi|^n}{|y-\xi^*|^n} dy\\
\leq& \int_0^{2\delta} \int_{|y-\xi|=r}\dfrac{1}{r^{n-1}} ds dr +
      \int_0^{3\delta} \int_{|y-\eta|=r}\dfrac{1}{r^{n-1}} ds dr + 
      \int_{2\delta}^{R} \int_{|y-\xi|=r} 2^n \dfrac{|\xi-\eta|}{r^{n}} ds dr\\
\leq& \int_0^{2\delta} \omega_n r^{-(n-1)} r^{n-1} dr +
      \int_0^{3\delta} \omega_n r^{-(n-1)} r^{n-1} dr + 
      \int_{2\delta}^{R} |\xi-\eta| r^{-n} \omega_n r^{n-1} dr\\
\leq& \left(5 + 2^n(\ln R - \ln 2) - 2^n \ln \delta\right) \omega_n \delta.
\end{align*}
Using this estimate in \eqref{eq:14e}, we obtain
 \begin{equation}
  \label{eq:14g}
     \left| \dfrac{\p\psi}{\p x_i}(\xi) - \dfrac{\p\psi}{\p x_i}(\eta)
   \right| \leq C(\M, |q|_\infty) (1-\ln \delta)\delta.
 \end{equation}
Thus, the claim of the quasi-Lipschitz continuity is proven. 

The H\"older continuity is a consequence of the quasi-Lipschitz
continuity. Indeed, for any $0 < \lambda < 1$, 
\begin{equation*}
  \dfrac{\left| \dfrac{\p\psi}{\p x_i}(\xi) - \dfrac{\p\psi}{\p
        x_i}(\eta)\right|}{|\xi - \eta|^\lambda} \leq C(\M,
  |q|_\infty) (1-\ln\delta) \delta^{1-\lambda}.
\end{equation*}
The $0<\delta<1$, the expression $|\ln\delta|\cdot\delta^{1-\lambda}$
has a maximum value of $e^{-1}/(1-\lambda)$. Hence, we have that 
\begin{equation*}
  \dfrac{\left| \dfrac{\p\psi}{\p x_i}(\xi) - \dfrac{\p\psi}{\p
        x_i}(\eta)\right|}{|\xi - \eta|^\lambda} \leq C(\M,
  |q|_\infty) \dfrac{1}{1-\lambda}.
\end{equation*}
The lemma is proven.
\end{proof}

In the sequel, we will need the following regularity result, which can be easily
derived from the classical $L^p$ theory for elliptic equations with
Dirichlet boundary conditions (\cite{Gilbarg1983-pq}). 
\begin{lemma}\label{lem:elliptic-reg1}
  Let $g\in L^p(\M)$ with $p>1$, and let $\psi$ be a solution of 
\begin{subequations}
  \label{eq:16a}
  \begin{align}
    \Delta \psi - \psi &= \dfrac{\p g}{\p x_i}, & \M&,\label{eq:16aa}\\
    \psi &= l, & \p\M&,\label{eq:16ab}\\
    \int_\M\psi dx &= 0. & &\label{eq:16ac}
  \end{align}
\end{subequations}
Then, $\psi$ has one generalized derivative, and
\begin{equation}
  \label{eq:16b}
  \| \psi\|_{W^{1,p}(\M)} \leq Cp \|g\|_{L^p(\M)}
\end{equation}
\end{lemma}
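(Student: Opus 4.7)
The plan is to reduce to a homogeneous Dirichlet problem via the same decomposition used in Section \ref{sec:bvp}. Write $\psi = \psi_1 + l\psi_2$, where $\psi_1$ solves $(\Delta - I)\psi_1 = \p g/\p x_i$ in $\M$ with $\psi_1|_{\p\M} = 0$, $\psi_2$ is the function defined in \eqref{eq:11}, and $l = -\int_\M \psi_1\,dx / \int_\M \psi_2\,dx$ is chosen to enforce \eqref{eq:16ac}; the denominator is a strictly positive, $\M$-dependent constant by the maximum principle, as noted after \eqref{eq:13}. Since $\psi_2$ depends only on $\M$, one has $\|\psi_2\|_{W^{1,p}(\M)} \leq C(\M)$, so the whole estimate reduces to a $W^{1,p}$ bound on $\psi_1$ with a constant of order $p$.

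For $\psi_1$, I would use the Green's function \eqref{eq:13a} and shift the derivative off of $g$ via integration by parts,
\begin{equation*}
  \psi_1(x) = \int_\M G(x,y)\frac{\p g}{\p y_i}(y)\,dy = -\int_\M \frac{\p G}{\p y_i}(x,y)\,g(y)\,dy,
\end{equation*}
the boundary term vanishing because $G(x,\cdot)$ is zero on $\p\M$. By \eqref{eq:13ab}, the kernel $\p G/\p y_i$ is $O(|x-y|^{-1})$ near the diagonal, which is integrable in two dimensions, so Schur's test yields $\|\psi_1\|_{L^p(\M)} \leq C(\M)\|g\|_{L^p(\M)}$ with a constant independent of $p$. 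Differentiating once more in $x_j$ produces a principal-value singular integral with kernel $\p^2 G/(\p x_j\p y_i)(x,y)$ plus bounded remainders; up to the smooth corrections implicit in \eqref{eq:13ac}--\eqref{eq:13ad} and the $C^2$ regularity of $\p\M$, this kernel behaves like the second derivative of the Laplace Green's function and satisfies the standard Calder\'on--Zygmund size, smoothness, and cancellation conditions. The classical $L^p$ boundedness of such operators, whose operator norm grows linearly in $p$ as $p\to\infty$, then gives the desired $\|\nabla\psi_1\|_{L^p(\M)} \leq Cp\|g\|_{L^p(\M)}$; this is the very mechanism that underlies \eqref{eq:14}.

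Finally, H\"older's inequality gives $|l| \leq \bigl(\int_\M\psi_2\,dx\bigr)^{-1}|\M|^{1-1/p}\|\psi_1\|_{L^p(\M)} \leq C\|g\|_{L^p(\M)}$, and combining the pieces yields $\|\psi\|_{W^{1,p}(\M)} \leq \|\psi_1\|_{W^{1,p}(\M)} + |l|\,\|\psi_2\|_{W^{1,p}(\M)} \leq Cp\|g\|_{L^p(\M)}$. The main obstacle is the linear-in-$p$ growth of the singular-integral operator norm on a bounded $C^2$ domain: the kernel $\p^2 G/(\p x_j\p y_i)$ is not a pure convolution because of boundary reflections, so one cannot directly invoke a translation-invariant Calder\'on--Zygmund theorem. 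Nevertheless, this is a classical fact from the Gilbarg--Trudinger $L^p$ theory \cite{Gilbarg1983-pq}, with the sharp $p$-tracking carried out in the style of Yudovich \cite{Yudovich1963-bj}. Once this ingredient is in hand, the rest of the argument is bookkeeping via the decomposition.
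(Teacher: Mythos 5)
Your argument is correct and is precisely the filling-in the paper has in mind: the paper's ``proof'' of Lemma \ref{lem:elliptic-reg1} is simply a citation to the classical $L^p$ Dirichlet theory in \cite{Gilbarg1983-pq}, and you reduce to that theory by the same $\psi=\psi_1+l\psi_2$ decomposition that the paper uses in Section \ref{sec:bvp} for the BVP \eqref{eq:9}. The bookkeeping (bounding $|l|$ via H\"older against the $\M$-dependent denominator, absorbing $\|\psi_2\|_{W^{1,p}}$ into the constant) is clean, and you correctly identify both the key technical point --- that the $Cp$ growth comes from the Calder\'on--Zygmund operator norm of the singular kernel $\partial^2 G/\partial x_j\partial y_i$, tracked as in Yudovich --- and the reason the boundary term vanishes when the derivative is moved onto $G$.
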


\section{Weak formulation and the uniqueness}
We assume that $\psi$ is a classical solution of \eqref{eq:4} subjecting
to the constraints \eqref{eq:6}--\eqref{eq:8}. We let $\varphi\in
C^\infty(Q_T)$ with $\varphi|_{\p\M} = \varphi|_{t=T} = 0$. We
multiply \eqref{eq:4} with $\varphi$ and integrate by parts to obtain
\begin{multline}
  \label{eq:17}
  -\int_\M (\Delta\psi_0 - \psi_0)\varphi(x,0) dx - \int_0^T\int_\M
  (\Delta\psi - \psi)\dfrac{\p\varphi}{\p t}dx dt \\
   - \int_0^T\int_\M
  (\Delta\psi + y - \psi)\nabla^\perp\psi\cdot\nabla\varphi dxdt =
  \int_0^T\int_\M f\varphi dxdt.
\end{multline}
Thus, every classical solution of the barotropic QG equation also
solves the integral equation \eqref{eq:17}, but the converse is not
true, 
for the QGPV $q = \Delta\psi + y -\psi$ may not be differentiable
either in space $x$ or in time $t$. Solutions of \eqref{eq:17} are
called weak solutions of the barotropic QG. 

We establish the well-posedness of the barotropic QG  \eqref{eq:4},
\eqref{eq:6}--\eqref{eq:8} by working with its weak formulation first,
whose precise statement is given here.\\
\noindent{\bfseries Statement of the problem:}\\
\begin{equation}\label{problem}
  \begin{aligned}
  &\textrm{Let } \psi_0\in
  V. \textrm{ Find } \psi\in  L^\infty(0,T; V) \textrm{ such that  
  \eqref{eq:17} holds for every } \\
  &\varphi\in C^\infty(Q_T) \textrm{
  with }
 \varphi|_{\p\M} = 0=\varphi|_{t=T} = 0. 
  \end{aligned}
\end{equation}

We choose $\varphi(x,t) = g(t)\gamma(x)$ in \eqref{eq:17} with
$g\in C^\infty([0,T])$, $g(T) = 0$, and  $\gamma\in C^\infty_c(\M)$.
Substituting this $\varphi$ into \eqref{eq:17}, we have
%Then equation \eqref{eq:17} can be written as 
\begin{multline}
  \label{eq:18}
 - g(0) \int_\M (\Delta\psi_0 - \psi_0)\gamma dx - \int_0^Tg'(t)\int_\M
  (\Delta\psi - \psi)\gamma(x)dx dt \\
   - \int_0^Tg(t)\int_\M
  (\Delta\psi + y - \psi)\nabla^\perp\psi\cdot\nabla\gamma dxdt =
  \int_0^Tg(t)\int_\M f\gamma dxdt.
\end{multline}
If we take $g(0) = 0$ as well, then \eqref{eq:18} becomes
\begin{multline}
  \label{eq:19}
  - \int_0^Tg'(t)\int_\M
  (\Delta\psi - \psi)\gamma(x)dx dt    = \\
   \int_0^Tg(t)\int_\M
  \left((\Delta\psi + y - \psi)\nabla^\perp\psi\cdot\nabla\gamma +
    f\gamma\right) dxdt. 
\end{multline}
This shows that 
\begin{multline}
  \label{eq:20}
  \dfrac{d}{d t}\int_\M
  (\Delta\psi - \psi)\gamma(x)dx dt    = \\ \int_\M
  \left((\Delta\psi + y - \psi)\nabla^\perp\psi\cdot\nabla\gamma + f\gamma
  \right) dx\qquad \textrm{in }\D'(0,T).  
\end{multline}
Thanks to the fact that $C^\infty_c(\M)$ is dense in $H^1_0(\M)$, the
above also holds for every $\varphi\in H^1_0(\M)$. Thus, we conclude
that  $\Delta\psi - \psi$ is weakly continuous in time in the following
sense, 
\begin{equation*}
 {\int_\M (\Delta\psi -\psi)\gamma dx \textrm{ is continuous in time
    for every } \gamma\in H^1_0(\M).}
\end{equation*}

Integrating by parts in \eqref{eq:18}, we find 
\begin{multline}
  \label{eq:21}
 - g(0) \int_\M (\nabla\psi_0\cdot\nabla\gamma + \psi_0\gamma) dx +
 \int_0^Tg'(t)\int_\M 
  (\nabla\psi\cdot\nabla\gamma + \psi\gamma)dx dt = \\
    \int_0^Tg(t)\int_\M
  \left((\Delta\psi + y - \psi)\nabla^\perp\psi\cdot\nabla\gamma +
    f\gamma\right) dxdt. 
\end{multline}
Again, taking $g(0) = 0$ yields
\begin{equation}
  \label{eq:22}
 \int_0^Tg'(t)\int_\M 
  (\nabla\psi\cdot\nabla\gamma + \psi\gamma)dx dt =
    \int_0^Tg(t)\int_\M
  \left( (\Delta\psi + y - \psi)\nabla^\perp\psi\cdot\nabla\gamma +
    f\gamma\right) dxdt. 
\end{equation}
Since $C^\infty_c(\M)$ is dense in the space $H^1_0(\M)$ under the
usual $H^1$-norm, the above holds for every $\gamma\in
H^1_0(\M)$. Thus, 
\begin{equation}
  \label{eq:23}
  \dfrac{d}{dt}\int_\M 
  (\nabla\psi\cdot\nabla\gamma + \psi\gamma)dx =
    -\int_\M \left(
  (\Delta\psi + y - \psi)\nabla^\perp\psi\cdot\nabla\gamma + f\gamma
  \right) dx\quad\textrm{ in } \D'(0,T).
\end{equation}
This implies that $\psi$ is weakly continuous in time for the
$H^1$-norm, 
\begin{equation*}
{\int_M(\nabla\psi\cdot\nabla\gamma + \psi\gamma)dx \textrm{ is
    continuous in time for every } \gamma\in H^1_0(\M). }
\end{equation*}

To investigate the initial value of $\psi$, we take
$g\in\C^\infty([0,T])$ with $g(0) \ne 0$ and $g(T) = 0$. We multiply
\eqref{eq:20} by $g(t)$ and integrate by parts in $t$ to obtain
\begin{multline}
  \label{eq:24}
  g(0) \int_\M (\Delta\psi(x,0) - \psi(x,0))\gamma dx - \int_0^Tg'(t)\int_\M
  (\Delta\psi - \psi)\gamma(x)dx dt \\
   - \int_0^Tg(t)\int_\M
  (\Delta\psi + y - \psi)\nabla^\perp\psi\cdot\nabla\gamma dxdt =
  \int_0^Tg(t)\int_\M f\gamma dxdt.
\end{multline}
Comparing \eqref{eq:24} with \eqref{eq:18}, we find that
\begin{equation}
  \label{eq:25}
  \int_\M\left( (\Delta\psi(x,0) - \psi(x,0)) - (\Delta\psi_0 -
    \psi_0)\right) \gamma dx = 0,\qquad \forall\, \gamma\in C^\infty_c(\M). 
\end{equation}
Since $C^\infty_c(\M)$ is dense in $L^2(\M)$, the above holds for
every $\gamma\in L^2(\M)$. In particular, setting $\gamma$ to the
function in the parentheses, we reach
\begin{equation*}
  \Delta\psi -\psi |_{t=0} = \Delta\psi_0 - \psi_0
  \qquad\textrm{in } L^2(\M).
\end{equation*}

Multiplying \eqref{eq:23} by the same $g(t)$ and integrating by parts
in time, we obtain
\begin{multline}
  \label{eq:26}
 - g(0) \int_\M (\nabla\psi(x,0)\cdot\nabla\gamma + \psi(x,0)\gamma) dx +
 \int_0^Tg'(t)\int_\M 
  (\nabla\psi\cdot\nabla\gamma + \psi\gamma)dx dt \\
    \int_0^Tg(t)\int_\M\left(
  (\Delta\psi + y - \psi)\nabla^\perp\psi\cdot\nabla\gamma + f\gamma
\right) dxdt.
\end{multline}
Comparing this equation with \eqref{eq:21}, we easily see that 
\begin{equation}
  \label{eq:27}
  \int_\M \left( (\nabla\psi(x,0)-\nabla\psi_0)\cdot\nabla\gamma +
    (\psi(x,0) - \psi_0)\gamma \right) dx = 0,\quad
  \forall\,\gamma\in H^1_0(\M).
\end{equation}
From the above, one can infer that the initial condition \eqref{eq:8}
is satisfied in the $H^1$-norm. Indeed, we let 
\begin{equation}
  \label{eq:27a}
  h(x) = \psi(x,0) - \psi_0(x).
\end{equation}
Both $\psi_0$ and $\psi(\cdot,0)$ assume a constant value on the
boundary, and so does $h(x)$. If $h(x)$ vanishes on the boundary, then
we can simply set $\gamma$ to 
$h(x)$, and the conclusion follows. If $h(x)$ does not vanish on the
boundary, then let $\sigma$ be its constant boundary value, and write
\begin{equation}
  \label{eq:27b}
  h(x) = h^\#(x) + \sigma.
\end{equation}
The value $\sigma$ is related to $h^\#$ via
\begin{equation}
  \label{eq:27c}
  \sigma = -\dfrac{1}{|\M|}\int_\M h^\#(x)dx.
\end{equation}
Then $h^\#$ vanishes on the boundary and belongs to $H^1_0(\M)$. 
Substituting \eqref{eq:27b} and \eqref{eq:27c} into \eqref{eq:27}, and
setting $\gamma$ to $h^\#$, we obtain
\begin{equation}
  \label{eq:27d}
  \int_\M |\nabla h^\#(x,0)|^2 dx + \int_\M |h^\#(x,0)|^2 dx =
  \dfrac{1}{|\M|} \left( \int_\M h^\#(x,0)dx\right)^2.
\end{equation}
It is an easy exercise to show that
\begin{equation}
  \label{eq:27e}
\dfrac{1}{|\M|} \left( \int_\M h^\#(x,\tau)dx\right)^2 \leq \int_\M
|h^\#(x,\tau)|^2dx, \qquad \forall\,\tau.
\end{equation}
In view of \eqref{eq:27e}, the relation \eqref{eq:27d} is only
possible if 
\begin{equation*}
  |\nabla h^\#(\cdot,0)|_{L^2(\M)} = |h^\#(\cdot,0)|_{L^2(\M)} = 0.
\end{equation*}
Hence,
\begin{equation*}
  \psi |_{t=0} = \psi_0 \qquad\textrm{ in } H^1(\M).
\end{equation*}

We formally summarize these results in the following lemma.
\begin{lemma}\label{lem:weak-cont}
  The solution $\psi$ to the weak formulation
  \eqref{eq:17}, if it exists, is weakly continuous in the following sense,
  \begin{subequations}
  \begin{align}
  &\int_\M (\Delta\psi -\psi)\gamma dx \textrm{ is continuous in time
    for every } \gamma\in H^1_0(\M),\label{wc-1}\\
  &\int_M(\nabla\psi\cdot\nabla\gamma + \psi\gamma)dx \textrm{ is
    continuous in time for every } \gamma\in H^1_0(\M).\label{wc-2}
  \end{align}
  \end{subequations}
The initial condition is satisfied in the sense that 
\begin{subequations}
\begin{align}
%  &\int_\M \left( (\Delta\psi(x,0) - \psi(x,0)) - (\Delta\psi_0 -
%    \psi_0)\right) \gamma dx = 0,\qquad \forall\, \gamma\in
%    L^2(\M),\label{wc-3}\\ 
  &\Delta\psi -\psi |_{t=0} = \Delta\psi_0 - \psi_0
  & &\textrm{in } L^2(\M)\label{wc-3}\\
  &\psi |_{t=0} = \psi_0 & &\textrm{ in } H^1(\M).\label{wc-4}
  % &  \int_\M \left( (\nabla\psi(x,0)-\nabla\psi_0)\cdot\nabla\gamma +
  %   (\psi(x,0) - \psi_0)\gamma\right) dx = 0,\quad
  % \forall\,\gamma\in H^1_0(\M).\label{wc-4}
\end{align}
\end{subequations}
\end{lemma}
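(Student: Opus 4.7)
The plan is to obtain both the weak continuity in time and the attainment of the initial data by a single unified device: insert test functions of the separated form $\varphi(x,t) = g(t)\gamma(x)$ into the weak formulation \eqref{eq:17}, and vary the choice of $g$ so as to either kill the $t=0$ boundary term (by taking $g(0)=0$) or retain it (by taking $g(0)\neq 0$). Density arguments then upgrade the resulting identities from $\gamma\in C^\infty_c(\M)$ to the natural closures $H^1_0(\M)$ or $L^2(\M)$.

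For the weak-continuity assertions \eqref{wc-1} and \eqref{wc-2}, I would take $\gamma\in C^\infty_c(\M)$ and $g\in C^\infty([0,T])$ with $g(0)=g(T)=0$. Substituting into \eqref{eq:17} and rearranging produces the distributional identity
\begin{equation*}
\dfrac{d}{dt}\int_\M(\Delta\psi-\psi)\gamma\,dx = \int_\M\left((\Delta\psi+y-\psi)\nabla^\perp\psi\cdot\nabla\gamma + f\gamma\right) dx\qquad\textrm{in }\D'(0,T).
\end{equation*}
Because $\psi\in L^\infty(0,T;V)$ and $\nabla\psi\in L^\infty(Q_T)$ by Lemma \ref{lem:elliptic-reg}, the right-hand side is bounded in $t$, so $t\mapsto \int_\M(\Delta\psi-\psi)\gamma\,dx$ admits a continuous representative; density of $C^\infty_c(\M)$ in $H^1_0(\M)$ extends this to all $\gamma\in H^1_0(\M)$ and proves \eqref{wc-1}. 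An integration by parts in $x$ before repeating the computation produces the $H^1$-pairing version, and delivers \eqref{wc-2} by the same reasoning.

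For the initial traces, I would repeat the substitution but now take $g(T)=0$ with $g(0)\neq 0$. Direct substitution into \eqref{eq:17} retains an initial term involving $\Delta\psi_0-\psi_0$, while multiplying the distributional identity just obtained by the same $g$ and integrating by parts in $t$ produces the same structure but with $\Delta\psi(\cdot,0)-\psi(\cdot,0)$ in place of $\Delta\psi_0-\psi_0$. Subtracting yields $\int_\M((\Delta\psi-\psi)|_{t=0}-(\Delta\psi_0-\psi_0))\gamma\,dx = 0$ for all $\gamma\in C^\infty_c(\M)$, and by $L^2$-density I can take $\gamma$ equal to the function inside the parentheses, giving \eqref{wc-3}. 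The analogous manipulation on the $H^1$-pairing identity produces \eqref{eq:27}.

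The main obstacle is in concluding \eqref{wc-4} from \eqref{eq:27}, because $h := \psi(\cdot,0)-\psi_0$ need not vanish on $\p\M$: it is only constant there, with a boundary value $\sigma$ inherited from the common boundary constants of $\psi(\cdot,0)$ and $\psi_0$. Thus $h$ itself is not a legal test function in $H^1_0(\M)$. My plan is to decompose $h = h^\# + \sigma$ with $h^\#|_{\p\M}=0$, and to use the mass-conservation constraint \eqref{eq:7} to pin down $\sigma = -|\M|^{-1}\int_\M h^\#\,dx$. Testing \eqref{eq:27} against $h^\#\in H^1_0(\M)$ collapses the identity to $\|\nabla h^\#\|_{L^2(\M)}^2 + \|h^\#\|_{L^2(\M)}^2 = |\M|^{-1}(\int_\M h^\#\,dx)^2$, and the elementary Cauchy--Schwarz bound $|\M|^{-1}(\int_\M h^\#\,dx)^2\leq\|h^\#\|_{L^2(\M)}^2$ then forces $h^\#\equiv 0$, hence $h\equiv 0$. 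This final step is where the free-surface mass-conservation constraint does genuine work: without it the initial trace would only be pinned down modulo an additive constant.
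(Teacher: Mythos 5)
Your proposal reproduces the paper's argument essentially step for step: separated test functions $g(t)\gamma(x)$ with $g(0)=0$ yield the distributional identities and weak continuity (\eqref{wc-1}, \eqref{wc-2}) via density of $C^\infty_c(\M)$ in $H^1_0(\M)$; taking $g(0)\neq 0$ and comparing the two resulting identities yields the traces; and the decomposition $h = h^\# + \sigma$ with $\sigma$ pinned down by the mass-conservation constraint, followed by testing against $h^\#$ and the Cauchy--Schwarz bound $|\M|^{-1}(\int_\M h^\#)^2\le\|h^\#\|_{L^2}^2$, is exactly the paper's route to \eqref{wc-4}. This matches the paper's proof, so no further comment is needed.
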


By virtue of Lemma \ref{lem:elliptic-reg}, any weak solutions of
\eqref{eq:17} automatically have second weak derivatives in space. In
fact, it also has second temporal-spatial cross derivatives, according
to the following lemma.
\begin{lemma}\label{lem-t-deriv}
Let $\psi(x,t)$ be a generalized solution of \eqref{eq:4},
\eqref{eq:6}--\eqref{eq:8} in the sense of \eqref{eq:17}. Then there
exists generalized derivatives $\p^2\psi/\p x\p t$ and, for any $p>1$, 
\begin{equation}
  \label{eq:28}
  \sup_{0<t<T} \|\dfrac{\p^2\psi}{\p x\p t}\|_{L^p(\M)} \leq
    Cp\sup_{0<t<T} \left(\|F\|_\lpm +
      \|\psi\|_{L^\infty(0,T;V)}\cdot\|\nabla\psi\|_\lpm \right).
\end{equation}
\end{lemma}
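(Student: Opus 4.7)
My plan is to derive an elliptic boundary-value problem satisfied by $\p\psi/\p t$ whose right-hand side is in divergence form, and then invoke Lemma \ref{lem:elliptic-reg1} to harvest the $W^{1,p}$ bound on $\psi_t$; this coincides with the desired $L^p$ bound on the mixed derivative $\p^2\psi/\p x\p t$. Formally, differentiating the identity $\Delta\psi-\psi=q-y$ in time and substituting the QG equation together with $f=\nabla\times\bs{F}$ and $\nabla\cdot\ub=0$ produces
\begin{equation*}
\Delta\psi_t-\psi_t \;=\; -\nabla\cdot(q\ub)+\nabla\times\bs{F},
\end{equation*}
whose right-hand side is a sum of two terms of the form $\p g_i/\p x_i$. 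Differentiating the boundary condition $\psi=l(t)$ on $\p\M$ and the mass constraint $\int_\M\psi\,dx=0$ shows that $\psi_t$ is spatially constant on $\p\M$ (equal to $l'(t)$) and mean-zero on $\M$, precisely matching the structure of \eqref{eq:16a}.

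The hard part will be converting this formal computation into a rigorous statement, since $\psi$ is only a weak solution in time. For each fixed $t$, $q\in L^\infty(\M)$ (by the definition of the $V$-norm and the a priori bound $\|\psi\|_V=\|q-y\|_{L^\infty(\M)}$) and $\ub=\nabla^\perp\psi\in\lpm$, so $q\ub\in\lpm$; together with $\bs{F}\in\lpm$, Lemma \ref{lem:elliptic-reg1}, applied to each divergence-form term separately and then summed, yields a unique $\tilde\psi(\cdot,t)\in W^{1,p}(\M)$ with constant boundary value and zero mean that solves the BVP above. I would then identify $\tilde\psi$ with $\p_t\psi$ by testing the distributional identity \eqref{eq:20} against $\gamma\in C^\infty_c(\M)$: integrating the advective term by parts into $-\int_\M\gamma\,\nabla\cdot(q\ub)\,dx$ and integrating the curl in $f=\nabla\times\bs{F}$ by parts recovers exactly the weak form of the BVP solved by $\tilde\psi$, from which the equality $\tilde\psi=\p_t\psi$ in $\D'(Q_T)$ follows. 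The weak continuity \eqref{wc-1} then upgrades this to an almost-everywhere-in-$t$ identification.

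With $\tilde\psi=\psi_t$ in place, Lemma \ref{lem:elliptic-reg1} delivers
\begin{equation*}
\|\psi_t\|_{W^{1,p}(\M)} \leq Cp\bigl(\|q\ub\|_\lpm+\|\bs{F}\|_\lpm\bigr) \leq Cp\bigl(\|q\|_{L^\infty(\M)}\|\nabla\psi\|_\lpm+\|\bs{F}\|_\lpm\bigr),
\end{equation*}
where the last step uses H\"older's inequality together with $|\nabla^\perp\psi|=|\nabla\psi|$. Splitting $q=(q-y)+y$ and using $\|q-y\|_{L^\infty(\M)}=\|\psi\|_V$ with the boundedness of $y$ on $\M$, the stray $y$-contribution is a lower-order term in $\|\nabla\psi\|_\lpm$ that can be absorbed into the $\M$-dependent constant. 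Taking the supremum over $t\in(0,T)$ then yields \eqref{eq:28}. The only non-routine ingredient in all of this is the rigorous identification of $\tilde\psi$ with the distributional time derivative of $\psi$; everything else is a direct application of Lemma \ref{lem:elliptic-reg1} and H\"older.
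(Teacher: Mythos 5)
Your approach coincides with the paper's: both rewrite the transport equation as the elliptic problem $(\Delta-I)\,\partial_t\psi=\nabla\times\bs{F}-\nabla\cdot\bigl(\nabla^\perp\psi\,(\Delta\psi+y-\psi)\bigr)$, note that $\partial_t\psi$ inherits a spatially-constant boundary value and zero mean, and then apply Lemma~\ref{lem:elliptic-reg1} to the divergence-form right-hand side to obtain the $W^{1,p}$ bound. The extra care you add (rigorously identifying the BVP solution with the distributional time derivative via \eqref{eq:20}, and splitting $q=(q-y)+y$ to recover $\|\psi\|_V$) tightens up steps the paper treats formally, but it is the same argument, not a different route.
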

\begin{proof}
  We can rewrite equation \eqref{eq:4} as an elliptic equation,
  \begin{equation}
    \label{eq:29}
    (\Delta - I)\dfrac{\p}{\p t}\psi = \nabla\times F -
    \nabla\cdot\left(\nabla^\perp \psi(\Delta\psi + y -\psi)\right). 
  \end{equation}
Then, by Lemma \ref{lem:elliptic-reg1}, 
\begin{equation*}
  \left\|\dfrac{\p\psi}{\p t}\right\|_{\w1pm} \leq Cp\left(\|F\|_\lpm +
    \|\psi\|_{L^\infty(0,T: V)} \|\nabla\psi\|_\lpm\right).
\end{equation*}
Taking the supreme norm in time $t$ on the right-hand side, and then on
the left-hand side, we obtain 
\begin{equation*}
  \sup_{0<t<T} \left\|\dfrac{\p\psi}{\p t}\right\|_{\w1pm} \leq
  Cp\sup_{0<t<T}\left(\|F\|_\lpm + 
    \|\psi\|_{L^\infty(0,T: V)} \|\nabla\psi\|_\lpm\right).
\end{equation*}
\end{proof}

Finally, we are in a position to address the uniqueness of the
generalized solution of \eqref{eq:17}. 
\begin{theorem}\label{uniqueness}
  The generalized solution to the barotropic QG equation \eqref{eq:4},
  \eqref{eq:6}--\eqref{eq:8} in the sense of \eqref{eq:17}, if exists,
  must be unique.
\end{theorem}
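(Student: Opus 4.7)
The plan is to adapt Yudovich's $L^p$-based argument to the present setting, working with the $H^1$-norm of the streamfunction difference, which is the natural energy when the boundary trace is a nonzero constant. Suppose $\psi_1,\psi_2\in L^\infty(0,T;V)$ are two weak solutions sharing the initial datum $\psi_0$. Set $\tilde\psi:=\psi_1-\psi_2$, $\tilde q:=(\Delta-I)\tilde\psi=q_1-q_2$, $\tilde{\bs u}:=\nabla^\perp\tilde\psi=\bs u_1-\bs u_2$, and $\tilde l(t):=l_1(t)-l_2(t)$, so $\tilde\psi=\tilde l$ on $\p\M$, $\int_\M\tilde\psi\,dx=0$, and (since $\frac{d}{dt}\int_\M q_i\,dx=\int_\M f\,dx$ is common to both solutions) $\int_\M\tilde q(\cdot,t)\,dx=0$ for every $t$. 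Subtracting the two instances of \eqref{eq:20} yields, for every $\gamma\in H^1_0(\M)$,
\begin{equation*}
\frac{d}{dt}\int_\M \tilde q\,\gamma\,dx = \int_\M \bigl(\tilde q\,\bs u_1+q_2\,\tilde{\bs u}\bigr)\cdot\nabla\gamma\,dx\qquad\text{in }\D'(0,T).
\end{equation*}

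The next step is to test this identity against the (time-dependent) function $\gamma=\tilde\psi-\tilde l\in H^1_0(\M)$, carefully carrying the extra $\int_\M\tilde q\,\partial_t\gamma\,dx$ term produced by the time dependence. Three simplifications collapse this into a usable energy identity. First, $\int_\M\tilde q\,(\tilde\psi-\tilde l)\,dx=\int_\M\tilde q\,\tilde\psi\,dx$ because $\int_\M\tilde q\,dx=0$, and integration by parts (with the boundary contribution vanishing for the same reason) gives $\int_\M\tilde q\,\tilde\psi\,dx=-\|\tilde\psi\|_{H^1(\M)}^2$. Second, the cross term $\int_\M q_2\tilde{\bs u}\cdot\nabla\tilde\psi\,dx=\int_\M q_2\,\nabla^\perp\tilde\psi\cdot\nabla\tilde\psi\,dx=0$ pointwise. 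Third, the extra term $\int_\M\tilde q\,\partial_t\tilde\psi\,dx$ itself equals $-\tfrac12\frac{d}{dt}\|\tilde\psi\|_{H^1}^2$ by the same integration by parts. Combining these, and then integrating by parts once more in space (the boundary term vanishing because $\bs u_1\cdot\bs n=0$ while $\nabla\tilde\psi$ is normal to $\p\M$ wherever $\tilde\psi$ is constant), one arrives at
\begin{equation*}
\frac{1}{2}\frac{d}{dt}\|\tilde\psi\|_{H^1(\M)}^2 = -\int_\M \tilde q\,\bs u_1\cdot\nabla\tilde\psi\,dx = \int_\M \partial_i u_{1,k}\,\partial_i\tilde\psi\,\partial_k\tilde\psi\,dx.
\end{equation*}
The Yudovich estimate now applies almost verbatim: Hölder with conjugate exponents $p$ and $p/(p-1)$, followed by the interpolation $\|\nabla\tilde\psi\|_{L^{2p/(p-1)}}\le\|\nabla\tilde\psi\|_{L^2}^{(p-1)/p}\|\nabla\tilde\psi\|_{L^\infty}^{1/p}$, bounds the right-hand side by $2\|\nabla\bs u_1\|_{L^p}\|\nabla\tilde\psi\|_{L^2}^{2(p-1)/p}\|\nabla\tilde\psi\|_{L^\infty}^{2/p}$. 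Lemma~\ref{lem:elliptic-reg} supplies $\|\nabla\bs u_1\|_{L^p}=\|D^2\psi_1\|_{L^p}\le Cp$ and $\|\nabla\tilde\psi\|_{L^\infty}\le C$, with $C$ depending only on $\M$ and the uniform $L^\infty$-bounds on $q_1,q_2$. Writing $y(t)=\|\tilde\psi(\cdot,t)\|_{H^1}^2$, this is $y'\le Cp\,y^{1-1/p}$; since $y(0)=0$, solving via $z=y^{1/p}$ gives $y(t)\le(Ct)^p$ for every $p>1$, and letting $p\to\infty$ on any subinterval $[0,T_0]$ with $CT_0<1$ forces $y\equiv0$. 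Because $C$ is controlled uniformly on $[0,T]$, iterating on finitely many such subintervals yields $\tilde\psi\equiv0$ on all of $[0,T]$.

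The principal technical obstacle is that the weak identity \eqref{eq:20} is stated with time-\emph{independent} test functions, so inserting $\tilde\psi-\tilde l$ is not immediate. A clean way to justify this is to mollify $\tilde q$ in $t$, appeal to Lemma~\ref{lem-t-deriv} to control $\p^2_{xt}\psi_i$ in $L^p$ so that every term in the formal calculation converges, carry out the energy computation on the mollified problem, and then pass to the limit; an alternative is a DiPerna--Lions-type renormalization of the transport equation satisfied by $\tilde q$. The same regularity also furnishes the absolute continuity of $t\mapsto\|\tilde\psi\|_{H^1}^2$ needed to give the differential inequality its strong sense. Once these technicalities are settled, the Yudovich chain of estimates proceeds as in the classical two-dimensional Euler case, and uniqueness follows.
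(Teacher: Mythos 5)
Your proposal follows essentially the same route as the paper: form the difference of the two solutions, shift by the boundary constant so the test function lies in $H^1_0(\M)$, derive an $H^1$ energy identity, integrate by parts the trilinear term $\int_\M\Delta h^\#\,\nabla^\perp\psi^1\cdot\nabla h^\#\,dx$, apply H\"older with a variable exponent $p$ together with the $L^2$--$L^\infty$ interpolation, invoke Lemma~\ref{lem:elliptic-reg} for $\|D^2\psi^1\|_{L^p}\le Cp$, and close with a Gronwall-type inequality $y'\le Cp\,y^{1-1/p}$ whose $p\to\infty$ limit forces $y\equiv0$ on a short interval, iterated to cover $[0,T]$. The paper does exactly this with $\epsilon=2/p$ as the free parameter instead of $p$, and in integral rather than differential form.

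Two remarks on your technical points. First, you invoke $\int_\M\tilde q\,dx=0$, but this is not directly available from \eqref{eq:17} since the constant function is not an admissible test function (it does not vanish on $\p\M$); happily, it is also not needed: writing $\int_{\p\M}\p_n\tilde\psi\,ds=\int_\M\Delta\tilde\psi\,dx=\int_\M\tilde q\,dx$ (using $\int_\M\tilde\psi=0$), the two occurrences of $\tilde l\int_\M\tilde q\,dx$ cancel, and one still obtains $\int_\M\tilde q\,(\tilde\psi-\tilde l)\,dx=-\|\tilde\psi\|_{H^1(\M)}^2$ without any assumption on $\int_\M\tilde q$. The paper avoids the issue altogether by working with $\|\nabla h^\#\|_{L^2}^2$ and the elementary estimate \eqref{eq:27e}. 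Second, the ``principal technical obstacle'' you identify is somewhat overstated: the original weak formulation \eqref{eq:17} already allows \emph{time-dependent} test functions $\varphi\in C^\infty(Q_T)$, so one does not need \eqref{eq:20}, mollification, or DiPerna--Lions renormalization. The paper simply subtracts the two instances of \eqref{eq:17}, replaces $h$ by $h^\#+l$, integrates by parts in $t$ (using $h^\#(\cdot,0)=0$ and $\varphi(\cdot,t)=0$), and then passes $\varphi\to h^\#$ by density in $L^2(0,T;H^1_0(\M))$, citing Lemma~\ref{lem-t-deriv} to justify the continuity of each term. This is cleaner than mollifying in time, though your alternative would also work.
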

\begin{proof}
  We let $\psi^1$ and $\psi^2$ be two solutions to the weak problem
  for the same initial data $\psi_0$. Then, for any $t\in[0,\,T]$ and
  an 
  arbitrary $\varphi\in C^\infty(Q_t)$ with $\varphi|_{\p\M} =
  \varphi(\cdot,t) = 0$, $\psi^1$ and $\psi^2$ satisfy the following
  equations, respectively,
\begin{multline}
  \label{eq:30}
  \int_\M (\Delta\psi_0 - \psi_0)\varphi(x,0) dx - \int_0^t\int_\M
  (\Delta\psi^1 - \psi^1)\dfrac{\p\varphi}{\p t}dx dt \\
   - \int_0^T\int_\M
  (\Delta\psi^1 + y - \psi^1)\nabla^\perp\psi^1\cdot\nabla\varphi dxdt =
  \int_0^T\int_\M f\varphi dxdt,
\end{multline}
\begin{multline}
  \label{eq:31}
  \int_\M (\Delta\psi_0 - \psi_0)\varphi(x,0) dx - \int_0^t\int_\M
  (\Delta\psi^2 - \psi^2)\dfrac{\p\varphi}{\p t}dx dt \\
   - \int_0^T\int_\M
  (\Delta\psi^2 + y - \psi^2)\nabla^\perp\psi^2\cdot\nabla\varphi dxdt =
  \int_0^T\int_\M f\varphi dxdt.
\end{multline}
Subtracting these two equations, and denoting $h = \psi^1 - \psi^2$,
we obtain
\begin{multline}
  \label{eq:32}
- \int_0^t\int_\M
  (\Delta h - h)\dfrac{\p\varphi}{\p t}dx dt 
   - \int_0^T\int_\M
  (\Delta h  - h)\nabla^\perp\psi^1\cdot\nabla\varphi dxdt\\
  + \int_0^T\int_\M
  (\Delta \psi^2  +y - \psi^2)\nabla^\perp h \cdot\nabla\varphi dxdt = 0.
\end{multline}
An integration by parts in space in the first term leads to
\begin{multline}
  \label{eq:32a}
\int_0^t\int_\M
  (\nabla h\cdot\nabla\p_t\varphi + h\p_t\varphi)dx dt 
   - \int_0^T\int_\M
  (\Delta h  - h)\nabla^\perp\psi^1\cdot\nabla\varphi dxdt\\
  + \int_0^T\int_\M
  (\Delta \psi^2  +y - \psi^2)\nabla^\perp h \cdot\nabla\varphi dxdt = 0.
\end{multline}
Both $\varphi^1$ and $\varphi^2$ assume space-independent values on
the boundary $\p\M$, and so does the difference $h$ between
them. Thus, after a shifting in the vertical direction, $h$ will
vanish on the boundary. We denote this shifted function by $h^\#\in
L^\infty(0,t; H^1_0(\M))$. $h$ and $h^\#$ are related via
\begin{equation}
  \label{eq:33}
  h(x,\tau) = h^\#(x,\tau) + l(\tau),\qquad 0\leq \tau\leq t
\end{equation}
for some function $l(\tau)$. Both $\psi^1$ and $\psi^2$ have a zero
average over $\M$, and so does their difference $h$. Integrating
\eqref{eq:33} over $\M$ we establish a simple relation between $l$
and $h^\#$,
\begin{equation}
  \label{eq:34}
  l(\tau) = -\dfrac{1}{|\M|}\int_\M h^\#(x,\tau)dx.
\end{equation}
Since both $\psi^1$ and $\psi^2$ satisfy the same initial condition
\eqref{eq:8} in the sense of \eqref{wc-4}, it is easy to see that
\begin{equation}
  \label{eq:34c}
  h(\cdot,0) = h^\#(\cdot, 0) = 0.
\end{equation}

Replacing $h$ by $h^\#+l$ in the first and third integrals of
\eqref{eq:32a} yields
 \begin{multline}
  \label{eq:35}
\int_0^t\int_\M
  (\nabla h^\#\cdot\nabla\p_t\varphi + h^\#\p_t\varphi)dx d\tau + \\
  \int_0^t\int_M l\p_t\varphi dxd\tau 
   - \int_0^T\int_\M
  (\Delta h  - h)\nabla^\perp\psi^1\cdot\nabla\varphi dxd\tau \\
  + \int_0^T\int_\M
  (\Delta \psi^2  +y - \psi^2)\nabla^\perp h^\# \cdot\nabla\varphi dxd\tau = 0.
\end{multline}
In view of the regularity results in the previous lemma, and the facts
that $h^\#(\cdot,0) = h(\cdot,0) = 0$ and $\varphi(\cdot,t) = 0$, we
integrate by parts in time in \eqref{eq:35} and arrive at
 \begin{multline}
  \label{eq:36}
-\int_0^t\int_\M
  (\p_t\nabla h^\#\cdot\nabla\varphi + \p_t h^\#\varphi)dx d\tau -\\
  \int_0^t\int_M \p_t l\varphi dxd\tau 
   - \int_0^T\int_\M
  (\Delta h  - h)\nabla^\perp\psi^1\cdot\nabla\varphi dxd\tau \\
  + \int_0^T\int_\M
  (\Delta \psi^2  +y - \psi^2)\nabla^\perp h^\# \cdot\nabla\varphi dxd\tau = 0.
\end{multline}
We note that each of the integrals is linear and continuous with
respect to $\varphi$ in the norm of $L^2(0,T: H^1_0(\M))$. Thus, we
can let $\varphi$ tend to $h^\#$ in $L^2(0,T; H^1_0(\M))$, pass to
the limit in \eqref{eq:36}, and notice the fact that $\nabla h^\#\cdot
\nabla^\perp h^\# = 0$, we obtain
 \begin{multline*}
-\int_0^t\int_\M
  (\p_t\nabla h^\#\cdot\nabla h^\# + \p_t h^\# h^\#)dx d\tau -
  \int_0^t\int_M \p_t l h^\# dxd\tau  \\
   - \int_0^T\int_\M
  (\Delta h  - h)\nabla^\perp\psi^1\cdot\nabla h^\# dxd\tau = 0.
\end{multline*}
\begin{multline}
\label{eq:37}
-\int_0^t\dfrac{1}{2}\dfrac{d}{dt}\|h^\#\|^2_{H^1_0(\M)}dt d\tau -
  \int_0^t\dfrac{1}{2}\dfrac{d}{dt}\|h^\#\|^2_{L^2(\M)} d\tau \\
   - \int_0^t\p_t l\int_\M
  h^\# dxd\tau    - \int_0^t\int_\M
  (\Delta h  - h)\nabla^\perp\psi^1\cdot\nabla h^\# dxd\tau = 0.
\end{multline}
In the above, $\|\cdot\|_{H^1_0(\M)} \equiv \|\nabla(\cdot)\|_{L^2(\M)}$. 
Using the expression in \eqref{eq:34} for $l(\tau)$, we derive that
\begin{multline}
\label{eq:38}
-\dfrac{1}{2}\|h^\#(\cdot,t)\|^2_{H^1_0(\M)} d\tau -
  \dfrac{1}{2}\|h^\#(\cdot,t)\|^2_{L^2(\M)} d\tau 
   + \dfrac{1}{|\M|} \int_0^t \dfrac{1}{2}\p_t \left(\int_\M
  h^\# dx\right)^2 d\tau    \\
  - \int_0^t\int_\M
  (\Delta h  - h)\nabla^\perp\psi^1\cdot\nabla h^\# dxd\tau = 0.
\end{multline}
\begin{multline}
\label{eq:39}
\|h^\#(\cdot,t)\|^2_{H^1_0(\M)} + \|h^\#(\cdot,t)\|^2_{L^2(\M)}  
   - \dfrac{1}{|\M|} \left(\int_\M
  h^\# dx\right)^2   =  \\
  - 2\int_0^t\int_\M
  (\Delta h  - h)\nabla^\perp\psi^1\cdot\nabla h^\# dxd\tau.
\end{multline}
Using the estimate \eqref{eq:27e} for $\tau=t$ in the above, 
% We note that 
% \begin{equation*}
%   \dfrac{1}{|\M|}\left(\int_\M h^\#(\cdot,t)dx\right)^2 \leq
%   \dfrac{1}{|\M|} \left(\int_\M 1^2dx\right) \left(\int_\M
%     h^\#(\cdot,t)^2 dx\right)  = \|h^\#(\cdot,t)\|^2_{\l2m}.
% \end{equation*}
% Hence, 
% \begin{equation*}
%   \|h^\#(\cdot,t)\|_{\l2m}^2 \geq \dfrac{1}{|\M|}\left(\int_\M
%     h^\#(\cdot,t)dx\right)^2, 
% \end{equation*}
% and, from \eqref{eq:39},
one derives that
\begin{equation}
  \label{eq:40}
  \|h^\#(\cdot,t)\|_{H^1_0(\M)}^2 \leq -2 \int_0^t\int_\M (\Delta h -h
  )\nabla^\perp \psi^1\cdot\nabla h^\# dx d\tau.
\end{equation}
First, we note that, with the change of variable \eqref{eq:33}, 
\begin{equation*}
  \Delta h - h = \Delta h^\# - h^\# - l(\tau).
\end{equation*}
Thanks to the non-penetration boundary conditions on $\p\M$, we have
\begin{align*}
  &\int_\M h^\#\nabla^\perp \psi^1 \cdot \nabla h^\# dx = 0,\\
  &\int_\M l(\tau) \nabla^\perp \psi^1 \cdot \nabla h^\# dx = 0.
\end{align*}
Therefore, 
\begin{equation*}
  \int_\M (\Delta h -h
  )\nabla^\perp \psi^1\cdot\nabla h^\# dx = \int_\M \Delta
  h^\#\nabla^\perp\psi^1 \cdot\nabla h^\# dx,
\end{equation*}
and \eqref{eq:40} becomes
\begin{equation}
  \label{eq:41}
  \|h^\#(\cdot,t)\|_{H^1_0(\M)}^2 \leq -2 \int_0^t\int_\M \Delta h^\#
  \nabla^\perp \psi^1 \cdot\nabla h^\# dx d\tau.
\end{equation}
To further investigate the integral on the right-hand side, we
adopt the notations
\begin{equation*}
  (u_1,\,u_2) = \nabla^\perp \psi^1 = (-\p_2\psi^1,\,\p_1\psi^1).
\end{equation*}
Then, using the Einstein convention of repeated indices for $1\le
i,\,j\le 2$ and the fact that 
$\nabla^\perp\psi^1\cdot\nabla h^\#$ vanishes on the boundary $\p\M$,
we proceed by integration by parts and obtain
\begin{align*}
  \int_\M \Delta h^\# \nabla^\perp\psi^1\cdot\nabla h^\# dx 
  &= \int_\M \p_i^2 h^\# u_j\p_j h^\# dx \\
  &= -\int_\M \left(\p_i h^\# \p_i u_j \p_j h^\# +  
     \p_i h^\# u_j \p_j\p_i h^\#\right) dx\\
  &= - \int_\M \p_i h^\#\p_i u_j \p_j h^\# dx.
\end{align*}
Reverting back to the standard index conventions and after rearrangements,
we find that
\begin{multline}
  \label{eq:42}
  \int_\M \Delta h^\# \nabla^\perp\psi^1\cdot\nabla h^\# dx = \\-\int_\M
  \left\{ \left[-(\p_1 h^\#)^2 + (\p_2 h^\#)^2\right]\p_1\p_2\psi^1 +
    \p_1 h^\#\p_2 h^\#(\p_1^2\psi^1 - \p_2^2\psi^1)\right\} dx.
\end{multline}
Hence, we have
\begin{align*}
  &\| h^\#(\cdot,t)\|^2_{H^1_0(\M)} \\
\leq &2\int_0^t \int_\M \left\{ \left[-(\p_1 h^\#)^2 + (\p_2
    h^\#)^2\right]\p_1\p_2\psi^1 + \p_1 h^\#\p_2 h^\#(\p_1^2\psi^1 -
    \p_2^2\psi^1)\right\} dx d\tau\\ 
\leq &\int_0^t \int_\M \left\{ 2\left(|\p_1 h^\#|^2 + |\p_2
    h^\#|^2\right)\cdot|\p_1\p_2\psi^1| + (|\p_1 h^\#|^2 + |\p_2
       h^\#|^2)(|\p_1^2\psi^1| + |\p_2^2\psi^1|)\right\} dx d\tau\\
  \leq &\int_0^t \int_\M |\nabla h^\#|^2 \left( |\p_1^2\psi^1| +
        |\p_2^2\psi^1|+2|\p_1\p_2\psi^1| \right) dx d\tau.
\end{align*}
We note that, according to Lemma \ref{lem:elliptic-reg}, both
$\nabla\psi^1$ and $\nabla\psi^2$ are H\"older continuous in $\M$ and
essentially bounded in $(0,T)$, and so is $\nabla h^\#$. We let
$0<\epsilon < 1$ be an arbitrary parameter. Then, we have
\begin{multline*}
\| h^\#(\cdot,t)\|^2_{H^1_0(\M)} 
    \leq 
   |\nabla h^\#|^\epsilon_{L^\infty(Q_T)} \int_0^t \int_\M |\nabla
  h^\#|^{2-\epsilon} \left( |\p_1^2\psi^1| + 
        |\p_2^2\psi^1|+2|\p_1\p_2\psi^1| \right) dx d\tau.
\end{multline*}
Applying H\"older's inequality to the spatial integral on the
right-hand side, we obtain
\begin{equation}
\label{eq:43}
\| h^\#(\cdot,t)\|^2_{H^1_0(\M)} 
    \leq 
   C|\nabla h^\#|^\epsilon_{L^\infty(Q_T)} \int_0^t
   \|h^\#(\cdot,\tau)\|^{2-\epsilon}_{\h10m} \cdot
   \|\psi^1(\cdot,\tau)\|_{W^{2,\frac{2}{\epsilon}}(\M)} d\tau.
\end{equation}
We note that both $\psi^1$ and $\psi^2$ belongs to $L^\infty(0,T; V)$,
and so does $h$. By Lemma \ref{lem:elliptic-reg}, 
$\psi^1(\cdot,\tau)\in W^{2,p}(\M)$ with $p = 2/\epsilon$, and $\nabla
h^\#$, which equals $\nabla h$, is H\"older continuous, for
a.e.~$\tau$. We set
\begin{align*}
  M_1 =& \sup_{0<t<T} \|\psi^1(\cdot,t)\|_V,\\
  M_2 =& \sup_{0<t<T} \|h^1(\cdot,t)\|_V.
\end{align*}
Then, it is inferred from \eqref{eq:14} and \eqref{eq:15} that 
\begin{align*}
  &\sup_{0<t<T} \| \psi^1(\cdot,t)\|_{W^{2,\frac{2}{\epsilon}}(\M)}
  \leq C\dfrac{2}{\epsilon} M_1,\\
  &\sup_{0<t<T} \| \nabla h^\# (\cdot,t)\|_{\linfm}
  \leq C M_2,
\end{align*}
where $C$ designates generic constants that are independent of
$\psi^1$, $\psi^2$ and $\epsilon$. Using these estimates in
\eqref{eq:43} leads to 
\begin{equation}
\label{eq:44}
\| h^\#(\cdot,t)\|^2_{H^1_0(\M)} 
    \leq 
   \dfrac{C}{\epsilon} M_1 M_2^\epsilon \int_0^t
   \|h^\#(\cdot,\tau)\|^{2-\epsilon}_{\h10m} d\tau.
\end{equation}
We denote 
\begin{equation*}
  \sigma(\cdot,t) \equiv \| h^\#(\cdot, t)\|_{\h10m}.
\end{equation*}
Then \eqref{eq:44} can be written as 
\begin{equation}
  \label{eq:45}
  \sigma^2(t) \leq \dfrac{C}{\epsilon} M_1 M^\epsilon_2\int_0^t
  \sigma^{2-\epsilon}(\tau) d\tau.
\end{equation}
An estimate on $\sigma$ can be obtained by  the Gronwall
inequality. Indeed, we let 
\begin{equation*}
  F(t) = \dfrac{C}{\epsilon} M_1 M^\epsilon_2\int_0^t
  \sigma^{2-\epsilon}(\tau) d\tau.
\end{equation*}
Taking derivative of this function and using \eqref{eq:45}, we find
\begin{equation*}
  \dfrac{d}{dt} F(t) \leq \dfrac{C}{\epsilon} M_1 M^\epsilon_2
  F^{1-\frac{\epsilon}{2}}(t). 
\end{equation*}
Integration of this inequality yields
\begin{equation*}
  F(t) \leq (CM_1 t)^{\frac{2}{\epsilon}} M_2^2. 
\end{equation*}
Thus,
\begin{equation}
  \label{eq:46}
  \| h^\#(\cdot,t)\|^2_{\h10m} \leq F(t) \leq (C M_1
  t)^{\frac{2}{\epsilon}} M^2_2. 
\end{equation}
We take 
\begin{equation*}
  t^\ast = \dfrac{1}{2CM_1}.
\end{equation*}
Then, for $0\leq t\leq t^\ast$, 
\begin{equation}
  \label{eq:47}
  \| h^\#(\cdot,t)\|^2_{\h10m} \leq
  \left(\dfrac{1}{2}\right)^{\frac{2}{\epsilon}} M^2_2.  
\end{equation}
This estimate holds for arbitrary $\epsilon > 0$. Thus, $  \|
h^\#(\cdot,t)\|^2_{\h10m} $ must vanish for $0\leq t\leq t^\ast$. The
process can be repeated over subsequent time intervals of length
$t^\ast$, and thus $  \| h^\#(\cdot,t)\|^2_{\h10m} = 0$ for the whole
time interval $[0,\,T]$. Combined with the relations \eqref{eq:33} and
\eqref{eq:34}, it implies that 
\begin{equation*}
  h(\cdot,t) = 0\qquad \textrm{for a.e.} 0\leq t\leq T.
\end{equation*}
The generalized solution to the barotropic QG equation \eqref{eq:4},
\eqref{eq:6}--\eqref{eq:8} must be unique.
\end{proof}

\section{Existence of a solution to the weak problem}
We establish the existence of a solution to the weak problem
\eqref{eq:17} through an iterative scheme. 
To get started, we set
\begin{equation}
  \label{eq:48}
 q^0(x,t) = q_0(x),
\end{equation}
where $q_0$ is the initial QGPV computed from the initial
streamfunction $\psi_0$. 
We assume that $q^n$, for a $n\ge 0$, is known, we compute $q^{n+1}$
as follows. First, the streamfunction $\psi^n$ corresponding to $q^n$
is obtained from the non-standard elliptic BVP,
\begin{subequations}
  \label{eq:49}
  \begin{align}
    \Delta\psi^n - \psi^n + \beta y &= q^n, & &\M,\label{eq:49a}\\
    \psi^n &= l^n, & &\p\M,\label{eq:49b}\\
    \int_\M \psi^n dx &= 0.\label{eq:49c} & &
  \end{align}
\end{subequations}
The corresponding velocity field $\ub^n$ is obtained through
\begin{equation}
  \label{eq:50}
  \ub^n = \nabla^\perp \psi^n. 
\end{equation}
From this velocity field, a flow mapping (or the trajectory path) $\Phi^n_t(a)$ for each
$a\in\M$ is constructed,
\begin{subequations}
  \label{eq:51}
  \begin{align}
    \dfrac{d}{dt}\Phi^n_t(a) &= \ub^n(\Phi^n_t(a), t),\label{eq:51a}\\
    \Phi^n_0(a) &= a.\label{eq:51b}
  \end{align}
\end{subequations}
Finally, the QGPV field is updated using the flow mapping,
\begin{equation}
  \label{eq:52a}
  q^{n+1}(\Phi^n_t(a), t) = q_0(a) + \int_0^t
  f(\Phi^n_s(a),s)ds, \qquad \forall a\in\M,
\end{equation}
or, equivalently,
\begin{equation}
  \label{eq:52b}
  q^{n+1}(x, t) = q_0(\Phi^n_{-t}(x)) + \int_0^t
  f(\Phi^n_{s-t}(x),s)ds,\qquad \forall x\in\M.
\end{equation}

The iterative scheme \eqref{eq:48}--\eqref{eq:52b} is straightforward
except for the solution of the initial value problem
\eqref{eq:51}. The elegant argument presented in \cite{Kato1967-cj}
requires $\ub$ to be $C^1$, which is not the case here. Given that
$q^n(\cdot, t)\in L^\infty(\M)$, $u^n$ is quasi-Lipschitz according to
Lemma \ref{lem:elliptic-reg}. It is well known that (see \cite{Marchioro1994-yt}) the
quasi-Lipschitz continuity is also {\itshape sufficient} to ensure the
global existence and uniqueness of a solution to the
IVP. Specifically, the following result is available.

\begin{lemma}\label{lem:flow}
  Assume the velocity field $\ub^n$ is uniformly bounded in $Q_T$ and
  quasi-Lipschitz continuous in $\M$, i.e.~$\ub^n(x,t) \le C$ for
  $\forall\, (x,t)\in Q_T$, $|\ub^n(x_1,t) - \ub^n(x_2,t)|\leq
  C\chi(|x_1 - x_2|)$, with $C$ independent of $x$ or $t$. Then
  the initial value problem \eqref{eq:51} has a global unique
  solution. 
\end{lemma}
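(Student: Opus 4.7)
The plan is to separate existence from uniqueness, the latter being the substantive part since the modulus $\chi$ is strictly weaker than Lipschitz but still Osgood-admissible.

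For existence, I would first observe that the quasi-Lipschitz bound makes $\ub^n(\cdot,t)$ continuous, uniformly in $t$, because $\chi(\delta)\to 0$ as $\delta\to 0^+$. Together with the uniform $L^\infty$ bound, the classical Peano existence theorem (via Euler polygons and Ascoli--Arzel\`a) yields a local $C^1$ trajectory $t\mapsto\Phi^n_t(a)$. Since $|\ub^n|\le C$, one has $|\Phi^n_t(a)-a|\le Ct$, which rules out finite-time blow-up, and the trajectory stays in $\overline{\M}$ thanks to the non-penetration condition $\ub^n\cdot\nb=0$ on $\p\M$ built into $\ub^n=\nabla^\perp\psi^n$. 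The solution therefore extends to all of $[0,T]$.

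For uniqueness, let $\Phi^1_t,\Phi^2_t$ be two solutions sharing the initial datum $a$, and set $r(t):=|\Phi^1_t-\Phi^2_t|$. From the ODE and the quasi-Lipschitz hypothesis,
\begin{equation*}
\dot r(t)\le |\ub^n(\Phi^1_t,t)-\ub^n(\Phi^2_t,t)|\le C\chi(r(t)).
\end{equation*}
The crucial analytic fact is that $\chi(s)=s(1-\ln s)$ satisfies the Osgood divergence condition near the origin: the substitution $u=1-\ln s$ reduces $\int_0^{1/2}ds/\chi(s)$ to $\int_{1+\ln 2}^{\infty}du/u=+\infty$. A standard comparison argument then shows $r(0)=0\Rightarrow r\equiv 0$. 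Indeed, if $r(t^*)>0$ for some $t^*\in(0,T]$, continuity of $r$ yields a largest $t_0\in[0,t^*)$ with $r(t_0)=0$; on $(t_0,t^*]$ separation of variables gives $\int_{r(t_0+\ep)}^{r(t^*)}ds/\chi(s)\le C(t^*-t_0-\ep)$, and letting $\ep\downarrow 0$ forces the left-hand side to diverge, a contradiction.

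The main obstacle is precisely the loss of true Lipschitz continuity, which voids the usual Picard--Gronwall uniqueness argument. The quasi-Lipschitz modulus $\chi$ circumvents this only because of the integrability imbalance between $s(1-\ln s)$, which is integrable near zero, and its reciprocal, which is not; once this analytic fact is in hand, uniqueness follows mechanically from the Osgood comparison above, and the existence step is routine modulo checking invariance of $\overline{\M}$ under the flow.
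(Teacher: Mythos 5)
Your proposal is correct but takes a genuinely different route from the paper. The paper proves both existence and uniqueness constructively through Picard iteration: starting from $\Phi^0_t(a)=a$, it iterates \eqref{eq:56} and controls the increments $|\Phi^k_t(a)-\Phi^{k-1}_t(a)|$ using the elementary bound $\chi(r)\le -\ln\epsilon\cdot r+\epsilon$ valid for every $\epsilon\in(0,1)$, then optimizes the free parameter by taking $\epsilon=e^{-k}$ and applies Stirling's formula to show the iterates are Cauchy on $[0,t^\ast]$ with $t^\ast=1/(2Ce)$; uniqueness is handled by the same $\epsilon$-parameterized Gronwall estimate applied to the difference of two putative solutions. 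You instead obtain existence non-constructively via Peano/Ascoli--Arzel\`a (quasi-Lipschitz plus boundedness yields uniform equicontinuity of Euler polygons) and uniqueness via the Osgood criterion $\int_{0^+} ds/\chi(s)=\infty$, verified cleanly by the substitution $u=1-\ln s$. Both arguments succeed, but they are not equally useful in context: the paper's $\chi(r)\le -\ln\epsilon\cdot r+\epsilon$ trick together with the Stirling estimate is deployed again almost verbatim in the convergence proof of the outer iteration (see \eqref{eq:78}--\eqref{eq:89}), so proving the lemma this way builds exactly the quantitative toolbox the main argument needs, whereas your Osgood route is shorter and isolates the conceptual content but would have to be supplemented separately for those later estimates. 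One small caution on your existence step: the lemma's hypotheses assert boundedness on $Q_T$ and quasi-Lipschitz continuity in $x$ uniformly in $t$, but say nothing about continuity in $t$; Peano strictly requires joint continuity, so you should either invoke a Carath\'eodory-type existence theorem (measurability in $t$, continuity in $x$) or remark that in the application $\ub^n$ is in fact continuous in $t$ because $q^n$ is transported along a continuous flow.
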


The proof is recalled in Appendix \ref{ap-1}. We note that, since
$\ub^n$ is divergence free, the corresponding flow mapping preserves
area, by virtue of the Liouville Theorem (see
e.g.~\cite{Hartman2002-pg}). 

We now show that the sequence $(q^n, \,\psi^n,\, \ub^n)$ generated by
the iterative scheme converges, and the limit solves the weak problem
\eqref{eq:17}. We follow the procedure laid out in
\cite{Marchioro1994-yt}, and start with the convergence of the flow
mapping $\Phi^n_t(\cdot)$. 
\begin{lemma}
  As $n\longrightarrow \infty$,
  \begin{equation*}
    \Phi^n_t(a) \longrightarrow \Phi_t(a) \qquad \textrm{strongly in }
    L^\infty(0,T; L^1(\M)). 
  \end{equation*}
\end{lemma}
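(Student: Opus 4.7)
The plan is to establish that $\{\Phi^n\}$ is Cauchy in $L^\infty(0,T;L^1(\M;\R^2))$, from which completeness supplies a limit $\Phi$; the uniqueness result in Theorem \ref{uniqueness} then identifies $\Phi$ as the flow of the unique weak solution. I introduce
\[
D_n(t) \;:=\; \sup_{0\leq s\leq t} \int_\M \bigl|\Phi^n_s(a) - \Phi^{n-1}_s(a)\bigr|\,da
\]
and aim to show $D_n(T)\to 0$. The update formula \eqref{eq:52b}, combined with the area-preservation of each $\Phi^n$ (Liouville's theorem, since $\ub^n=\nabla^\perp\psi^n$ is divergence-free), yields the uniform bound $\|q^n(\cdot,t)\|_{L^\infty(\M)}\leq \|q_0\|_\infty+T\|f\|_{L^\infty(Q_T)}$, whence Lemma \ref{lem:elliptic-reg} gives uniform-in-$n$ bounds on $\|\ub^n\|_{L^\infty(Q_T)}$ and on the quasi-Lipschitz modulus of $\ub^n(\cdot,t)$.

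Subtracting the integrated form of \eqref{eq:51a} at indices $n$ and $n-1$ and inserting the auxiliary term $\pm\ub^n(\Phi^{n-1}_s(a),s)$ splits the difference into two pieces. The first piece is bounded pointwise by $C\chi(|\Phi^n_s(a)-\Phi^{n-1}_s(a)|)$ via \eqref{eq:16}; integrating in $a$ and using Jensen's inequality (valid since $\chi''=-1/\delta<0$ on $(0,1)$, so $\chi$ is concave) gives a contribution of at most $C|\M|\chi(D_n(s)/|\M|)$, which I absorb into a rescaled $\chi$. The second piece becomes $\|\ub^n(\cdot,s)-\ub^{n-1}(\cdot,s)\|_{L^1(\M)}$ after the area-preserving change of variable $b=\Phi^{n-1}_s(a)$. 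The resulting master inequality is
\[
D_n(t) \;\leq\; C\int_0^t \chi\!\bigl(D_n(s)\bigr)\,ds \;+\; \int_0^t \|\ub^n(\cdot,s)-\ub^{n-1}(\cdot,s)\|_{L^1(\M)}\,ds.
\]

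The principal obstacle is controlling the velocity-difference term by $D_{n-1}$. From \eqref{eq:13e} together with the estimate \eqref{eq:13f} on the boundary constant, $\ub^n-\ub^{n-1}=\nabla^\perp(\psi^n-\psi^{n-1})$ is given by a Green's-function convolution of $q^n-q^{n-1}$ against a kernel with $|x-y|^{-1}$ singularity. Writing $q^n-q^{n-1}$ through \eqref{eq:52b} shows that for any Lipschitz test function $\phi$,
\[
\int_\M \phi(x)\,(q^n-q^{n-1})(x,t)\,dx = \int_\M q_0(a)\bigl[\phi(\Phi^{n-1}_t(a))-\phi(\Phi^{n-2}_t(a))\bigr]\,da + (\text{forcing analog}),
\]
which is bounded by $C\|q_0\|_\infty\,\mathrm{Lip}(\phi)\,D_{n-1}(t)$. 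If $q_0$ were itself Lipschitz, the Green's-function bounds \eqref{eq:13aa}--\eqref{eq:13ab} would then yield $\|\ub^n-\ub^{n-1}\|_{L^1(\M)}\leq C\,D_{n-1}(t)$ directly. For general $q_0\in L^\infty(\M)$, I would approximate $q_0$ in $L^1(\M)$ by Lipschitz data $q_0^\rho$, run the Cauchy argument for each $\rho>0$ (where it closes cleanly), and use area-preservation of every $\Phi^n$ to control the $O(\|q_0-q_0^\rho\|_{L^1})$ approximation error uniformly in $n$.

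Feeding the velocity bound into the master inequality gives an estimate of Osgood type, $D_n(t)\leq C\int_0^t\chi(D_n(s))\,ds + C\,D_{n-1}(t) + \epsilon(\rho)$. Because
\[
\int_0^1 \frac{d\delta}{\chi(\delta)} \;=\; \int_0^1 \frac{d\delta}{(1-\ln\delta)\delta} \;=\; \infty,
\]
Osgood's lemma converts the differential inequality into geometric-type contraction on a short time interval $[0,t^\ast]$, and finitely many iterations cover $[0,T]$, yielding $D_n(T)\to 0$ once $\rho$ is chosen small enough. I expect the hardest step to be the rigorous $L^\infty$-to-Lipschitz approximation of $q_0$ ensuring that the Green's-function estimate on $\|\ub^n-\ub^{n-1}\|_{L^1(\M)}$ is uniform in the mollification parameter; once that is in hand, Osgood and the master inequality close the Cauchy argument.
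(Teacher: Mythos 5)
Your decomposition of $\ub^n(\Phi^n_s) - \ub^{n-1}(\Phi^{n-1}_s)$ by inserting $\pm\ub^n(\Phi^{n-1}_s(a),s)$ is sound, the Jensen step using concavity of $\chi$ matches the paper's (the paper writes ``convexity'' but applies Jensen in the concave direction), and your master inequality $D_n(t) \leq C\int_0^t\chi(D_n(s))\,ds + \int_0^t \|\ub^n-\ub^{n-1}\|_{L^1}\,ds$ is correct. The Osgood formulation is a legitimate alternative to the paper's explicit $\chi(r)\leq -\ln\epsilon\cdot r + \epsilon$ optimization.

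However, there is a genuine gap in the control of $\|\ub^n(\cdot,s)-\ub^{n-1}(\cdot,s)\|_{L^1(\M)}$ by $D_{n-1}$, and you have identified it yourself without resolving it. Your plan is to test $q^n-q^{n-1}$ against the kernel $K(x,\cdot)$, transfer the difference onto the test function via change of variable, and use Lipschitz continuity of the test function -- but $K(x,\cdot)$ has a $|x-y|^{-1}$ singularity and is not Lipschitz, so the bound $C\|q_0\|_\infty\,\mathrm{Lip}(\phi)\,D_{n-1}$ is not available. The fallback of mollifying $q_0$ introduces a Lipschitz constant $\mathrm{Lip}(q_0^\rho)\sim\rho^{-1}$ that fights the $L^1$-error $O(\rho)$; the two parameters do not decouple, and the Cauchy contraction does not close uniformly in $\rho$. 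You flag this as ``the hardest step'' but do not supply the missing argument, so the proof as proposed is incomplete.

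The paper closes this step differently and without any regularity of $q_0$ beyond $L^\infty$: after substituting \eqref{eq:52b}, one changes variables $y=\Phi^{n-1}_t(b)$ and $y=\Phi^{n-2}_t(b)$ in the two competing integrals \emph{before} taking absolute values, so that $q_0(b)$ is evaluated at the same point $b$ in both and factors out with an $|q_0|_\infty$ bound, and the entire difference lands on the kernel arguments,
\begin{equation*}
\int_\M\bigl(K(x,\Phi^{n-1}_t(b))-K(x,\Phi^{n-2}_t(b))\bigr)q_0(b)\,db,
\end{equation*}
plus the analogous forcing term. One then integrates in $x$ and uses the quasi-Lipschitz continuity of $y\mapsto\int_\M K(x,y)\,dx$ (a property of the Green's function established from the arguments in Lemma \ref{lem:elliptic-reg}, not a property of the data), yielding $\|\ub^n-\ub^{n-1}\|_{L^1}\leq C(|q_0|_\infty,|f|_\infty,\M)\bigl(\int_\M\chi(\delta^{n-1}_t(b))\,db + \cdots\bigr)$. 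Combined with the Jensen step, this produces a recursion in $\chi(\rho^{n-1})$ entirely parallel to your $\chi(D_n)$ term, and the geometric contraction closes with no mollification. In short: move the modulus of continuity onto the kernel (where quasi-Lipschitz is already in hand), not onto the initial datum.
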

\begin{proof}
We first write the IVP \eqref{eq:51} in the integral form,
From \eqref{eq:54}, 
  \begin{equation}
    \label{eq:70}
  \Phi^n_t(a) = a + \int_0^t \ub^n(\Phi^n_s(a),s)ds.
  \end{equation}
We subtract $\Phi^n_t(a)$ and $\Phi^{n-1}_t(a)$,
  \begin{equation}
    \label{eq:71}
  \Phi^n_t(a) - \Phi^{n-1}_t(a) = \int_0^t \left(
    \ub^n(\Phi^n_s(a),s) - \ub^n(\Phi^n_s(a),s) \right)ds.
  \end{equation}
We define 
\begin{equation}
  \label{eq:72}
  K(x,y) = \nabla_x^\perp G(x,y),
\end{equation}
where $G(x,y)$ is the Green's function for $\Delta - I$ with Dirichlet
boundary conditions on $\M$. 
Then, using the relation \eqref{eq:13e},
\begin{multline}
\label{eq:u-n}
  \ub^n(\Phi^n_s(a),s) = \nabla^\perp \psi_n(\Phi^n_s(a),s)  =\\
  \int_\M K(\Phi^n_s(a),y)(q^n(y,s) -y_2)dy + l(q^n)\int_\M
  K(\Phi^n_s(a),y)dy. 
\end{multline}
Thus, 
\begin{align*}
  &\ub^n(\Phi^n_s(a),s) - \ub^{n-1}(\Phi^{n-1}_s(a),s) \\
  =& \int_\M \left( K(\Phi^n_s(a),y)q^n(y,s) -
     K(\Phi^{n-1}_s(a),y)q^{n-1}(y,s)\right) dy - \\
    &\int_\M \left(
     K(\Phi^n_s(a),y) - 
     K(\Phi^{n-1}_s(a),y)\right)y_2 dy + \\
    &\int_\M \left(
     l(q^n)K(\Phi^n_s(a),y) - 
     l(q^{n-1})K(\Phi^{n-1}_s(a),y)\right)y_2 dy 
\end{align*}
Using \eqref{eq:52a}, the difference between
$K(\Phi^n_s(a),y)q^n(y,s)$ and 
$K(\Phi^{n-1}_s(a),y)q^{n-1}(y,s)$  can be split into three parts, 
\begin{multline*}
\int_\M\left(  K(\Phi^n_s(a),y)q^n(y,s) -
  K(\Phi^{n-1}_s(a),y)q^{n-1}(y,s)\right)dy = \\
\int_\M\left(  K(\Phi^n_s(a),y)-
  K(\Phi^{n-1}_s(a),y)\right) q^{n}(y,s) dy + \\
\int_\M  K(\Phi^{n-1}_s(a),y)\left( q_0(\Phi^{n-1}_{-s}(y)) -
  q_0(\Phi^{n-2}_{-s}(y))\right) dy + \\ 
\int_\M  K(\Phi^{n-1}_s(a),y) \int_0^s \left(f(\Phi^{n-1}_{\tau-s}(y),\tau) -
  f(\Phi^{n-2}_{\tau-s}(y), \tau)\right) d\tau dy.
\end{multline*}
Thus, the difference in the velocity fields $u^n(\Phi^n_s(a),s)$ and
$\ub^{n-1}(\Phi^{n-1}_s(a),s)$ is split into four parts, labeled as
$I-IV$ below,
\begin{multline}
\label{eq:72aaa}
    \ub^n(\Phi^n_s(a),s) - \ub^{n-1}(\Phi^{n-1}_s(a),s) = \\
\int_\M\left(  K(\Phi^n_s(a),y)-
  K(\Phi^{n-1}_s(a),y)\right) \left(q^{n}(y,s) - y_2 + l(q^n)\right)
dy + \\
\int_\M  K(\Phi^{n-1}_s(a),y)\left( q_0(\Phi^{n-1}_{-s}(y)) -
  q_0(\Phi^{n-2}_{-s}(y))\right) dy + \\ 
\int_\M  K(\Phi^{n-1}_s(a),y) \int_0^s \left(f(\Phi^{n-1}_{\tau-s}(y),\tau) -
  f(\Phi^{n-2}_{\tau-s}(y), \tau)\right) d\tau dy + \\
\int_\M  K(\Phi^{n-1}_s(a),y)\left( l(q^n) - l(q^{n-1})\right)dy \\
:= \textrm{I} + \textrm{II} + \textrm{III} + \textrm{IV}.
\end{multline}
We now estimate each of these four parts. For the first term $I$, it
is clear from the proof of Lemma \eqref{lem:elliptic-reg} that
$\int_\M K(x,y)dy$ is quasi-Lipschitz continuous. Also using the fact
that $q^n$ is essentially bounded, we proceed, 
\begin{align*}
  |I| &\leq |q^n-y_2 + l|_\infty \int_\M \left| K(\Phi^n_s(a), y) -
        K(\Phi^{n-1}_s(a),y)\right| dy \\
  &\leq C|q^n-y_2 + l|_\infty \left( 1+\left|\ln |\Phi^n_s(a) - \Phi^{n-1}_s(a)|\right|\right)
    \cdot |\Phi^n_s(a) - \Phi^{n-1}_s(a)|.
  \end{align*}
We define
\begin{equation}
\label{eq:72a}
  \delta^n_s(a) = \left|\Phi^n_s(a) - \Phi^{n-1}_s(a)\right|.
\end{equation}
Then, for the term $I$ we have
\begin{equation}
  \label{eq:72aa}
 |I| \leq  C(\M, |q_0|_\infty, |f|_\infty) \chi\left( \delta^n_s(a)\right),
\end{equation}
We apply change of variables in II, utilizing that fact that the
mapping $y = \Phi^n_s(b)$ is area preserving,
\begin{align*}
  II &= \int_\M K\left(\Phi^{n-1}_s(a),y\right)
       q_0\left(\Phi^{n-1}_{-s}(y)\right) dy - \int_\M
       K\left(\Phi^{n-1}_s(a),y\right) 
       q_0\left(\Phi^{n-2}_{-s}(y)\right) db\\
&= \int_\M K\left(\Phi^{n-1}_s(a),\Phi^{n-1}_s(b)\right))
       q_0\left(b \right) db - \int_\M
       K\left(\Phi^{n-1}_s(a),\Phi^{n-2}_s(b)\right) 
       q_0\left( b \right) db\\
&= \int_\M \left(K\left(\Phi^{n-1}_s(a),\Phi^{n-1}_s(b)\right))-
              K\left(\Phi^{n-1}_s(a),\Phi^{n-2}_s(b)\right) \right)
       q_0\left( b \right) db.
\end{align*}
Integrating $II$ in $a$ over $\M$, we find that
\begin{align*}
  \int_\M |II|da \leq |q_0|_\infty \int_\M \int_\M \left|K\left(x,\Phi^{n-1}_s(b)\right))-
              K\left(x,\Phi^{n-2}_s(b)\right) \right| dx db
\end{align*}
Again, with the quasi-Lipschitz continuity of $\int_\M K(x,y)dy$, we
derive that
\begin{equation}
  \label{eq:73}
  \int_\M |II|da \leq |q_0|_\infty \int_\M \chi(\delta^{n-1}_s(b) )db. 
\end{equation}
Also applying the change of variable and integration to $III$, 
\begin{align*}
  III &= \int_0^s\int_\M K\left(\Phi^{n-1}_s(a),y\right)
       f\left(\Phi^{n-1}_{\tau-s}(y),\tau\right) dyd\tau - \int_0^s \int_\M
       K\left(\Phi^{n-1}_s(a),y\right) 
       f\left(\Phi^{n-2}_{\tau-s}(y),\tau\right) dyd\tau\\
&= \int_0^s \int_\M K\left(\Phi^{n-1}_s(a),\Phi^{n-1}_{s-\tau}(a)\right)
       f\left(b, \tau \right) dbd\tau - \int_0^s \int_\M
       K\left(\Phi^{n-1}_s(a),\Phi^{n-2}_{s-\tau}(b)\right) 
       f\left( b, \tau \right) db d\tau\\
&= \int_0^s \int_\M \left(K\left(\Phi^{n-1}_s(a),\Phi^{n-1}_{s-\tau}(b)\right))-
              K\left(\Phi^{n-1}_s(a),\Phi^{n-2}_{s-\tau}(b)\right) \right)
       f\left( b, \tau \right) dbd \tau.
\end{align*}
By integrating $|III|$ over $\M$, and, again using the quasi-Lipschitz
continuity of $\int_\M K(x,y)dy$, we reach
% \begin{align*}
%   \int_\M |III|da \leq C |f|_\infty \int_\M \int_0^s \int_\M \left|K\left(x,\Phi^{n-1}_{s-\tau}(b)\right))-
%               K\left(x,\Phi^{n-2}_{s-\tau}(b)\right) \right| dx d\tau db
% \end{align*}
\begin{equation}
  \label{eq:74}
  \int_\M |III|da \leq |f|_\infty \int_\M \int_0^s \chi(\delta^{n-1}_{s-\tau}(b))d\tau db. 
\end{equation}

Using the formula \eqref{eq:13d} for $l(q)$, we find that
\begin{equation}
  \label{eq:75}
     l(q^n) - l(q^{n-1}) = -\dfrac{\int_\M \int_\M G(x,y)(q^n(y,s)-q^{n-1}(y,s)) dydx}{|\M| + \int_\M
     \int_\M G(x,y)dydx}.
\end{equation}
Using the formula \eqref{eq:52b} for $q^n$ and $q^{n-1}$ in the above, and
the mapping provided by $\Phi^n_t$ and $\Phi^{n-1}_t$, we obtain
that, after some changes of variables,
\begin{multline*}
     l(q^n) - l(q^{n-1}) = -\dfrac{1}{|\M| + \int_\M\int_\M G(x,y)dydx}\cdot\\
     \left(\int_\M \int_\M q_0(a) \left(G(x,\Phi^{n-1}_s(a)) -
         G(x,\Phi^{n-2}_s(a))\right) dx\,da +\right. \\
     \left.\int_\M \int_\M\int_0^s  \left(G(x,\Phi^{n-1}_{s-\tau} (a)) -
         G(x,\Phi^{n-2}_{s-\tau}(a))\right)f(a,\tau)d\tau dxda\right).
\end{multline*}
Using the assumptions that $q_0$ and $f$ are essentially bounded, in
$\M$ and $Q_T$, respectively, we obtain
\begin{multline*}
    \left| l(q^n) - l(q^{n-1})\right| = \dfrac{|q_0|_\infty + |f|_\infty}{|\M| + \int_\M\int_\M G(x,y)dydx}\cdot\\
     \left(\int_\M \int_\M \left|G(x,\Phi^{n-1}_s(a)) -
         G(x,\Phi^{n-2}_s(a))\right| dx\,da +\right. \\
     \left.\int_\M \int_\M\int_0^s  \left|G(x,\Phi^{n-1}_{s-\tau}(a)) -
         G(x,\Phi^{n-2}_{s-\tau}(a))\right| d\tau dxda\right).
\end{multline*}
It is also clear from the proof of Lemma \ref{lem:elliptic-reg} that
$\int_\M G(x,y)dx$ is $C^1$. Therefore, we have that
\begin{multline*}
    \left| l(q^n) - l(q^{n-1})\right| \leq C(\M, q_0, f)
     \left(\int_\M \left|\Phi^{n-1}_s(a) -
         \Phi^{n-2}_s(a)\right| da +\right. \\
     \left.\int_\M\int_0^s  \left|\Phi^{n-1}_{s-\tau}(a) -
         \Phi^{n-2}_{s-\tau}(a)\right| d\tau da\right).
\end{multline*}
Using the notation introduced in \eqref{eq:72a}, one can write the
above estimate as
\begin{equation}
  \label{eq:76}
    \left| l(q^n) - l(q^{n-1})\right| \leq C(\M, q_0, f) \left(
      \int_\M \delta^{n-1}_s(a)da + \int_0^s\int_\M
      \delta^{n-1}_{s-\tau}(a) da d\tau\right).
\end{equation}
Thus,
\begin{equation*}
  |IV| \leq C(\M, |q_0|_\infty, |f|_\infty)\left(
      \int_\M \delta^{n-1}_s(a)da + \int_0^s\int_\M
      \delta^{n-1}_{s-\tau} da d\tau\right)\cdot \int_\M
    K(\Phi^{n-1}_s(a), y)dy
\end{equation*}
In the analysis leading to the estimate \eqref{eq:14b}, it is clear
that the last integral on the right-hand side is uniformly
bounded, and the bound depends on the domain $\M$ only. Therefore, we
can simply write the above relation as
\begin{equation}
  \label{eq:77}
  |IV| \leq C(\M, |q_0|_\infty, |f|_\infty)\left(
      \int_\M \delta^{n-1}_s(a)da + \int_0^s\int_\M
      \delta^{n-1}_{s-\tau}(a) da\, d\tau\right). 
\end{equation}
% Again, $C(\M, |q_0|_\infty, |f|_\infty)$ designates generic constants that depend on
% $\M$, $q_0$ and $f$. It may vary in the derivations. 

Using the relation \eqref{eq:72aaa} and the estimates \eqref{eq:72aa},
\eqref{eq:73}, \eqref{eq:74}, and \eqref{eq:77} in \eqref{eq:71}, we obtain
\begin{multline}
  \label{eq:78}
 \int_\M \left|\Phi^n_t(a) - \Phi^{n-1}_t(a)\right|da \leq 
C(\M, |q_0|_\infty, |f|_\infty) \int_0^t \left( \int_\M \chi(\delta^n_s(a)) da\right. + \\
  \left.\int_\M \chi(\delta^{n-1}_s(a))da + \int_0^s\int_\M
  \chi(\delta^{n-1}_{s-\tau}(a) da d\tau + \right.\\
  \left.\int_\M
  \delta^{n-1}_s(a)da + \int_0^s \int_\M
  \delta^{n-1}_{s-\tau}(a)dad\tau \right) ds.
\end{multline}
We define
\begin{equation}
  \label{eq:79}
  \rho^n_t = \dfrac{1}{|\M|}  \int_\M \left|\Phi^n_t(a) -
    \Phi^{n-1}_t(a)\right|da \equiv \dfrac{1}{|\M|}  \int_\M
  \delta^n_t(a) da. 
\end{equation}
We note that, thanks to the convexity of $\chi$, 
\begin{equation}
  \label{eq:80}
  \dfrac{1}{|\M|}  \int_\M\chi\left(\delta^n_t(a)\right)  da \leq
  \chi\left(  \dfrac{1}{|\M|} \int_\M\delta^n_t(a) da\right)  \equiv
  \chi(\rho^n_s). 
\end{equation}
Then \eqref{eq:78} can be written as 
\begin{equation}
  \label{eq:81}
  \rho^n_t \leq C(\M, |q_0|_\infty, |f|_\infty) \int_0^t \left( \chi(\rho^n_s) +
    \chi(\rho^{n-1}_s) + \rho^{n-1}_s + \int_s
    \chi(\rho^{n-1}_{s-\tau})d\tau + \int_0^s \rho^{n-1}_{s-\tau}
    d\tau\right) ds.
\end{equation}
For an arbitrary $n>0$, we define
\begin{equation}
\label{eq:81a}
  e^n(t) = \sup_{k\ge n} \rho^k_t. 
\end{equation}
Therefore, for a fixed $t$, $\rho^n_t \leq e^n(t)$, and $e^n(t)$ is
monotonically decreasing in 
$n$. We also note that, since $\chi(\cdot)$ is a monotonically
increasing function, 
\begin{equation*}
  \chi(\rho^n_s) \leq \chi(e^n(s)) \leq \chi(e^{n-1}(s)). 
\end{equation*}
Using these relations in \eqref{eq:81}, one finds that 
\begin{multline}
  \label{eq:82}
  e^n(t) \leq C(\M, |q_0|_\infty, |f|_\infty) \int_0^t \left( 2\chi(e^{n-1}(s))  +
     e^{n-1}(s)\right. \\+ \left.\int_0^s
    \chi(e^{n-1}({s-\tau}))d\tau + \int_0^s e^{n-1}(s-\tau)
    d\tau\right) ds.
\end{multline}
For an arbitrary scalar function $h(\cdot)$, we note that
\begin{equation*}
  \int_0^t \int_0^s h(s-\tau) d\tau ds = \int_0^t (t-s)h(s) ds.
\end{equation*}
Thus one can replace  the double integrals in \eqref{eq:82} with single
integrals, and after rearrangements, one obtains
\begin{equation}
  \label{eq:83}
  e^n(t) \leq C(\M, |q_0|_\infty, |f|_\infty) \int_0^t \left( (t-s+2)\chi(e^{n-1}(s))  +
     (t-s+1)e^{n-1}(s) \right) ds.
\end{equation}
Using the inequality \eqref{eq:57a} on the function $\chi$, one
obtains from \eqref{eq:83} that 
\begin{align*}
  e^n(t) &\leq C(\M, |q_0|_\infty, |f|_\infty) \int_0^t  \left((t-s+2)\left(-\ln\epsilon
      \cdot e^{n-1}(s)+\epsilon\right)    +
     \left( t-s+1 \right)e^{n-1}(s)  \right) ds\\
&\leq C(\M, |q_0|_\infty, |f|_\infty) \int_0^t  \left(3\left(-\ln\epsilon
      \cdot e^{n-1}(s)+\epsilon\right)    +
      2e^{n-1}(s)  \right) ds\\
&\leq C(\M, |q_0|_\infty, |f|_\infty) \int_0^t  \left(\left(-3\ln\epsilon + 2\right)
      \cdot e^{n-1}(s)+3\epsilon\right) ds\\
\end{align*}
Focusing on small $t$'s, say $t\in[0,\,1]$, one derive from the above
that 
\begin{equation}
  \label{eq:83a}
e^n(t) \leq C(\M, |q_0|_\infty, |f|_\infty) \left(-\ln\epsilon + 1\right)\int_0^t
  e^{n-1}(s) ds + C\epsilon t.
\end{equation}
This inequality is now very similar to the inequality \eqref{eq:58}.
When $n=1$, by the definition \eqref{eq:81a} and the relation
  \eqref{eq:71}, one derives that
  \begin{align*}
    e^1(t) = \sup_{k\ge 1} \rho^k(t) 
&= \sup_{k\ge 1}\dfrac{1}{|\M|}\int_\M \left|\Phi^k_t(a) - 
\Phi^{k-1}_t(a)\right| da\\
    &\leq \sup_{k\ge 1}\dfrac{1}{|\M|}\int_\M \int_0^t \left|\ub^k_t(\Phi^k_s(a),s) - 
\ub^{k-1}(\Phi^{k-1}_s(a),s)\right|ds da.
  \end{align*}
From above, and the uniform boundedness of $\ub^k$, one deduces that 
\begin{equation}
  \label{eq:84}
  e^1(t) \leq C(\M, |q_0|_\infty, |f|_\infty) t.
\end{equation}
We now temporarily omit the dependence of $C(\M, |q_0|_\infty, |f|_\infty)$, and
simply write $C$, for the sake of conciseness.
By induction, one can derive that
\begin{equation}
  \label{eq:85}
      e^n(t) \leq C^n
    \left(-\ln\epsilon+1\right)^{n-1} \dfrac{t^n}{n!} +
    \epsilon\sum_{k=1}^{n-1} C^k\left(-\ln\epsilon+1\right)^{k-1}
      \dfrac{t^k}{k!}. 
\end{equation}
Majorizing the last summation on the right-hand side, one obtains that 
\begin{equation}
  \label{eq:86}
      e^n(t) \leq \dfrac{C\cdot
      t\left(C(-\ln\epsilon + 1)t\right)^{n-1}}{n!} +
      C\cdot t\cdot\epsilon^{1-Ct}\cdot e^{Ct}.
\end{equation}
As $\epsilon$ can be arbitrary, we take $\epsilon = e^{-(n-1)}$, and
find that 
\begin{equation}
  \label{eq:87}
      e^n(t) \leq \dfrac{\left(C\cdot t\right)^n n^{n-1}}{n!} +
      C\cdot t\cdot e^{-n(1-Ct)}\cdot e.
\end{equation}
Applying the Stirling formula \eqref{eq:60a} in the above, one obtains
that 
\begin{equation}
  \label{eq:88}
      e^n(t) \leq \dfrac{\left(C\cdot e\cdot t\right)^n}{\sqrt{2\pi} n^{\frac{3}{2}}} +
      C\cdot t \cdot e \cdot e^{-n(1-Ct)}.
\end{equation}
We let 
\begin{equation*}
  t^* = \min\left\{ \dfrac{1}{2Ce},\,1,\, T\right\}.
\end{equation*}
Then, for all $0\leq t\leq t^*$,
\begin{equation}
  \label{eq:89}
      e^n(t) \leq \dfrac{1}{\sqrt{2\pi}
        n^{\frac{3}{2}}}\left(\dfrac{1}{2}\right)^n  +
      C\cdot T \cdot e \cdot e^{-\frac{1}{2}n}.
\end{equation}
Thus, $e^n(t)$ is a geometrically converging sequence independent of
the time $t$, and so is $\rho^n(t)$. We therefore have that
\begin{equation}
\label{eq:89a}
  \Phi^n_t(a) \longrightarrow \Phi_t(a)\qquad \textrm{ strongly in
  } L^\infty(0,t^*,\, L^1(\M)).
\end{equation}
We note that $t^*$ can be taken independently of $\epsilon$, and it
depends on $\M$, $|q_0|_\infty$, and $|f|_\infty$, and therefore the
above process can be repeated over intervals of length
$t^*$, until the whole interval $[0,\,T]$ is covered. The lemma is
thus proven.
\end{proof}

We now study the convergence of $q^n$. 
We define
\begin{equation}
  \label{eq:90}
  q(x, t) := q_0(\Phi_{-t}(x)) + \int_0^t
  f(\Phi_{s-t}(x),s)ds,\qquad \forall x\in\M.
\end{equation}
It is clear that $q\in L^\infty(Q_T)$, provided that $f\in
L^\infty(Q_T)$. We have the following result.
\begin{lemma}
%  As $n\longrightarrow \infty$,
Assume that $f\in L^\infty(0,T; C(\M))$. Then, for any $g\in C(\M)$, 
  \begin{equation}\label{eq:90a}
    \int_\M g(x) q^n(x,t)dx  \longrightarrow \int_\M g(x) q(x,t)dx
    \qquad \textrm{as } n\longrightarrow \infty.
  \end{equation}
The convergence is uniform in $t$.
\end{lemma}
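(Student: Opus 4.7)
The plan is to exploit the Lagrangian representations of $q^n$ and $q$ together with the $L^1$-convergence of the flow maps $\Phi^n_t \to \Phi_t$ established in the previous lemma. Since the velocity field $\ub^{n-1}$ is divergence-free, the flow map $\Phi^{n-1}_t$ is area-preserving by Liouville's theorem, and the same holds in the limit for $\Phi_t$. Performing the change of variable $x = \Phi^{n-1}_t(a)$ in \eqref{eq:52b}, together with the composition identity $\Phi^{n-1}_{s-t}(\Phi^{n-1}_t(a)) = \Phi^{n-1}_s(a)$, yields
\begin{equation*}
\int_\M g(x)\, q^n(x,t)\,dx = \int_\M g(\Phi^{n-1}_t(a))\,q_0(a)\,da + \int_\M g(\Phi^{n-1}_t(a))\int_0^t f(\Phi^{n-1}_s(a),s)\,ds\,da,
\end{equation*}
and an analogous identity for $\int_\M g(x)\,q(x,t)\,dx$ involving $\Phi_t$ and $\Phi_s$. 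Subtracting the two splits the difference into three pieces: two pieces involving $g(\Phi^{n-1}_t(a)) - g(\Phi_t(a))$, and one piece involving $f(\Phi^{n-1}_s(a),s) - f(\Phi_s(a),s)$.

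For the two $g$-terms I would invoke the uniform continuity of $g$ on $\overline{\M}$. Given $\epsilon>0$, choose $\delta>0$ with $|g(x)-g(y)|<\epsilon$ whenever $|x-y|<\delta$, and split $\M$ into $A^n_t = \{a\in\M : |\Phi^{n-1}_t(a) - \Phi_t(a)| < \delta\}$ and its complement. On $A^n_t$ the integrand is at most $\epsilon$ pointwise; on its complement, Chebyshev's inequality gives $|(A^n_t)^c| \le \|\Phi^{n-1}_t - \Phi_t\|_{L^1(\M)}/\delta$, which tends to $0$ uniformly in $t\in[0,T]$ by the previous lemma. Weighting by the essentially bounded factors $q_0(a)$ and $\int_0^t f(\Phi_s(a),s)\,ds$, both $g$-terms are controlled by $C(\epsilon + \|\Phi^{n-1}-\Phi\|_{L^\infty(0,T;L^1(\M))}/\delta)$ uniformly in $t$.

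For the $f$-term I would pass to a subsequence. Uniform-in-$t$ $L^1(\M)$-convergence of $\Phi^{n-1}$ gives $\Phi^{n-1}\to\Phi$ in $L^1(\M\times(0,T))$, so along a subsequence $\Phi^{n_k-1}_s(a)\to\Phi_s(a)$ for almost every $(a,s)$. Since $f(\cdot,s)\in C(\M)$ for a.e.~$s$, continuity of $f$ in the spatial variable yields $f(\Phi^{n_k-1}_s(a),s)\to f(\Phi_s(a),s)$ almost everywhere on $\M\times(0,T)$; the Lebesgue dominated convergence theorem, with dominating constant $2\|f\|_{L^\infty(Q_T)}$, then forces convergence of the double integral, and the subsequence-of-subsequences principle upgrades it to convergence of the full sequence. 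Bounding $\int_0^t \le \int_0^T$ at the estimate level preserves uniformity in $t\in[0,T]$.

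The chief obstacle is precisely this third term: the simple Chebyshev split available for $g$ relies on a uniform modulus of continuity of the outer function, and such a modulus is not guaranteed for $f(\cdot,s)$ uniformly in $s$ under the hypothesis $f\in L^\infty(0,T;C(\M))$. The extraction-plus-dominated-convergence argument just described is the natural workaround, converting $L^1$-convergence of the flow maps into convergence of $f$ composed with those flows without ever invoking a uniform continuity modulus on $f$ itself.
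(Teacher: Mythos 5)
Your proof is correct and reaches the result by a genuinely different route than the paper's. The paper keeps the $f$-integral in Eulerian form, assumes momentarily that $g\in C^1(\M)$ and $f\in L^\infty(0,T;C^1(\M))$, bounds the difference directly by $|\nabla g|_\infty$ and $|\nabla f|_\infty$ times the $L^1$ distance of the flow maps (including the backward maps $\Phi^n_{s-t}$), and then appeals to an unspecified ``continuity argument'' to pass from $C^1$ to $C$ by density. You instead apply the area-preserving change of variable to \emph{both} the $q_0$-term and the $f$-term, which has the nice side effect that only forward flows $\Phi^{n-1}_s(a)$ ever appear, sidestepping the need to discuss convergence of backward flows; you then split the resulting difference into three pieces, handle the two $g$-pieces via uniform continuity of $g$ on $\overline\M$ plus a Chebyshev estimate on the bad set (uniform in $t$ because $\sup_t\|\Phi^{n-1}_t-\Phi_t\|_{L^1}\to0$), and handle the $f$-piece by extracting an a.e.-convergent subsequence from the $L^1(\M\times(0,T))$-convergent flows, invoking continuity of $f(\cdot,s)$ and dominated convergence, and upgrading to the full sequence via the subsubsequence principle (with the $t$-uniformity coming for free from the bound $\int_0^t\le\int_0^T$). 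What each buys: the paper's route is shorter to state but leaves the density step implicit and tacitly requires a backward-flow version of the convergence lemma; your route is entirely self-contained, works with the stated hypotheses ($g\in C(\M)$, $f\in L^\infty(0,T;C(\M))$) without smoothing, and at no point needs a modulus of continuity for $f$ uniform in $s$ — which, as you correctly observe, the hypothesis does not supply.
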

\begin{proof}
We subtract \eqref{eq:90} from \eqref{eq:52b},
\begin{equation*}
q^{n+1}(x,t) -  q(x, t) = q_0(\Phi^n_{-t}(x)) -
q_0(\Phi_{-t}(x)) 
+ \int_0^t
 \left( f(\Phi^n_{s-t}(x),s) -  f(\Phi_{s-t}(x),s)\right) ds.
\end{equation*}
Multiplying the above equation with a $g\in C^1(\M)$, and integrate
over $\M$, we obtain
\begin{align*}
&\int_\M g(x) \left( q^{n+1}(x,t) -  q(x, t)\right) dx \\
= & \int_\M g(x) \left( q_0(\Phi^n_{-t}(x)) -
     q_0(\Phi_{-t}(x))\right) dx + \\
&\int_0^t\int_\M g(x)  \left( f(\Phi^n_{s-t}(x),s) -
     f(\Phi_{s-t}(x),s)\right) dx ds\\
= & \int_\M q_0(a) \left( g(\Phi^n_t(a)) - g(\Phi_t(a)\right) dx + \\
&\int_0^t\int_\M g(x)  \left( f(\Phi^n_{s-t}(x),s) -
     f(\Phi_{s-t}(x),s)\right) dx ds.
\end{align*}
For the moment, we assume that $g\in C^1(\M)$ and $f\in L^\infty(0,T;
C^1(\M))$. Then, we derive that
\begin{align*}
&\left| \int_\M g(x) \left( q^{n+1}(x,t) -  q(x, t)\right) dx\right| \\
\leq & |\nabla g|_\infty \cdot |q_0|_\infty \int_\M  \left| \Phi^n_{t}(a) -
     \Phi_{t}(a)\right| da + \\
& |g|_\infty \cdot |\nabla f|_\infty \int_0^t\int_\M  \left|\Phi^n_{s-t}(x) -
     \Phi_{s-t}(x)\right| dx ds.
\end{align*}
By the strong convergence of $\Phi^n_t$ in $L^\infty(0, T; L^1(\M))$,
the right-hand side above converges to zero as $n\longrightarrow
\infty$. Then, by a continuity argument, one can show that, for any
$g\in C(\M)$ and $f\in L^\infty(0,T; C(\M))$,
\begin{equation*}
  \int_\M g(x) (q^n(x,t) - q(x,t)) dx \longrightarrow
  0,\qquad\textrm{as } n\longrightarrow \infty.
\end{equation*}
% Hence we conclude that, for any $t \in (0, T)$,
% \begin{equation}
%   \label{eq:91}
%   q^n(\cdot,t) \rightharpoonup q(\cdot, t).
% \end{equation}
\end{proof}

We now verify the convergence of the velocity field $\ub^n$.
Using the QGPV $q$, we define
\begin{equation}
\label{eq:u}
  \ub(x,t) = 
  \int_\M K(x),y)(q(y,t) -y_2)dy + l(q) \int_\M
  K(x),y)dy. 
\end{equation}

\begin{lemma}
  As $n\longrightarrow \infty$,
  \begin{equation*}
    \ub^n(x,t) \longrightarrow \ub(x,t)\qquad \textrm{strongly in }
    L^\infty(0,T; L^1(\M)).
  \end{equation*}
\end{lemma}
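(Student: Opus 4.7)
The plan is to decompose, following \eqref{eq:u},
\begin{equation*}
\ub^n(x,t) - \ub(x,t) = \underbrace{\int_\M K(x,y)\bigl(q^n(y,t) - q(y,t)\bigr)\,dy}_{=:A^n(x,t)} + \underbrace{\bigl(l(q^n) - l(q)\bigr)\int_\M K(x,y)\,dy}_{=:B^n(x,t)},
\end{equation*}
and show that $\|A^n(\cdot,t)\|_{L^1(\M)}$ and $\|B^n(\cdot,t)\|_{L^1(\M)}$ each tend to zero uniformly on $[0,T]$. The term $B^n$ is the easy one: formula \eqref{eq:13d} yields
\begin{equation*}
l(q^n) - l(q) = -\frac{\int_\M g(y)\bigl(q^n(y,t) - q(y,t)\bigr)\,dy}{|\M| + \int_\M\int_\M G(x,y)\,dy\,dx},\qquad g(y) := \int_\M G(x,y)\,dx,
\end{equation*}
and the proof of Lemma \ref{lem:elliptic-reg} already shows $g \in C^1(\M) \subset C(\M)$. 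The preceding convergence lemma applied with this test function gives $l(q^n) \to l(q)$ uniformly in $t$, while \eqref{eq:13ab} together with integration in polar coordinates yields $\int_\M\int_\M |K(x,y)|\,dx\,dy \le C(\M)$; combined, these force $\|B^n(\cdot,t)\|_{L^1(\M)} \to 0$ uniformly in $t$.

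For $A^n$ we would use the trajectory representations \eqref{eq:52b} and \eqref{eq:90} together with the area-preservation of $\Phi^n_t, \Phi_t$ (Liouville, since $\ub^n, \ub$ are divergence free) to change variables in the integral over $y$ and rewrite
\begin{equation*}
A^{n+1}(x,t) = \int_\M [K(x,\Phi^n_t(b)) - K(x,\Phi_t(b))]\,q_0(b)\,db + \int_0^t\!\int_\M \bigl[K(x,\Phi^n_t(b))\,f(\Phi^n_\tau(b),\tau) - K(x,\Phi_t(b))\,f(\Phi_\tau(b),\tau)\bigr]\,db\,d\tau.
\end{equation*}
The second integrand is split via the telescoping identity $K^n f^n - K f = (K^n - K)f^n + K(f^n - f)$. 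Taking the $L^1(\M)$ norm in $x$, pushing absolute values inside, and applying Fubini reduces everything to the two kernel bounds
\begin{equation*}
\int_\M |K(x,\xi) - K(x,\eta)|\,dx \le C(\M)\,\chi(|\xi - \eta|), \qquad \int_\M |K(x,y)|\,dx \le C(\M),
\end{equation*}
where the first is the analogue of \eqref{eq:16} with the roles of the two arguments of $K$ interchanged. It is obtained by re-running the argument in the proof of Lemma \ref{lem:elliptic-reg} using the symmetry $G(x,y) = G(y,x)$ of the Helmholtz Green's function (that argument only uses the bound $|\partial G/\partial x_i| \le C/|x-y|$, which is symmetric in $x \leftrightarrow y$).

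Assembling these ingredients yields, for some $C = C(\M, |q_0|_\infty, |f|_\infty)$,
\begin{equation*}
\|A^{n+1}(\cdot,t)\|_{L^1(\M)} \le C\!\int_\M \chi(|\Phi^n_t(b) - \Phi_t(b)|)\,db + C\!\int_0^t\!\!\int_\M \Bigl[\chi(|\Phi^n_t(b) - \Phi_t(b)|) + |f(\Phi^n_\tau(b),\tau) - f(\Phi_\tau(b),\tau)|\Bigr]\,db\,d\tau.
\end{equation*}
By Jensen's inequality (since $\chi$ is concave) the $\chi$ terms are controlled by $\chi$ of $\frac{1}{|\M|}\int_\M |\Phi^n_t(b) - \Phi_t(b)|\,db$, which tends to $0$ by the preceding flow-convergence lemma combined with $\chi(r) \to 0$ as $r\to 0$. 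The remaining $|f^n - f|$ term is handled by the same density argument used in the preceding lemma for $q^n$: first prove convergence for $f \in L^\infty(0,T; C^1(\M))$ using $|f(\Phi^n_\tau(b)) - f(\Phi_\tau(b))| \le |\nabla f|_\infty\,|\Phi^n_\tau(b) - \Phi_\tau(b)|$, then extend to $f \in L^\infty(0,T; C(\M))$ by density.

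The main obstacle is the diagonal singularity $K \sim 1/|x-y|$: ordinary dominated convergence is unavailable because $\Phi^n \to \Phi$ holds only in an integrated sense while $K$ blows up on the diagonal. The key technical ingredient overcoming this is the quasi-Lipschitz bound in the \emph{second} argument of $K$, transferred from the first-argument estimate \eqref{eq:16} via symmetry of the Green's function; the logarithmic form of $\chi$ is precisely what is needed to dominate the $1/|x-y|$ singularity after integration in $x$ against the area-preserving flow map.
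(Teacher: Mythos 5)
Your proof is correct and follows essentially the same route as the paper: the same decomposition of $\ub^n-\ub$ into the kernel term $A^n$ and the boundary-constant term $B^n$, the same treatment of $B^n$ via the formula \eqref{eq:13d} and the preceding weak-convergence lemma for $q^n$, the same Lagrangian rewrite of $A^n$ via area-preservation, and the same reduction to a quasi-Lipschitz kernel estimate plus the flow-convergence lemma. Two points of divergence are worth noting. First, in the forcing part of $A^{n+1}$ you change variables by $y=\Phi^n_t(b)$, so that $f$ gets evaluated at the moving particle and you need the telescoping split and a density argument on $f$; the paper instead changes variables by $y=\Phi^{n-1}_{t-s}(b)$ for each $s$, so that $f$ stays evaluated at the Lagrangian label $(b,s)$ and no difference in $f$ ever appears -- the paper's choice is a bit cleaner, but your version works and the density step is exactly the one already used for $q^n$. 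Second, you invoke Jensen's inequality for the concave $\chi$ to pass the integral inside, whereas the paper uses the linearization $\chi(r)\le -\ln\epsilon\cdot r+\epsilon$ from \eqref{eq:57a}; both close the estimate. One thing you do better than the paper: you explicitly flag that the needed bound is $\int_\M |K(x,\xi)-K(x,\eta)|\,dx\le C\chi(|\xi-\eta|)$ -- quasi-Lipschitz in the \emph{second} argument of $K$ with integration over the first -- which is not literally ``the quasi-Lipschitz continuity of $\int_\M K(x,y)dy$'' that the paper cites. Your justification via the structure \eqref{eq:13ad} (equivalently, the symmetric singular structure of the Green's function, with the $c,d$ coefficients smooth in both arguments) is the right way to fill this gap, and is a genuine point of care that the paper elides.
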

\begin{proof}
Subtracting \eqref{eq:u} from \eqref{eq:u-n}, we have
\begin{multline}
\label{eq:u-conv}
\ub^n(x,t) -  \ub(x,t) = 
  \int_\M K(x),y)(q^n(y,t) -q(y,t) )dy + \\
    (l(q^n) - l(q))  \int_\M K(x,y)dy. 
\end{multline}
From \eqref{eq:13d}, one obtains 
 \begin{equation*}
   l(q^n) - l(q) = -\dfrac{\int_\M \int_\M G(x,y)(q^n(y) - q(y)) dydx}{|\M| + \int_\M
     \int_\M G(x,y)dydx}.
 \end{equation*}
We note that $\int_M G(x,y) dx$ is continuous in $y$. Thus, by
\eqref{eq:90a}, we confirm that
\begin{equation*}
  l(q^n) - l(q) \longrightarrow 0 \qquad \textrm{as } n\longrightarrow
  \infty. 
\end{equation*}
We also note that $\int_\M K(x,y)dy$ is uniformly bounded. Therefore,
we have
\begin{equation}
  \label{eq:92}
  \left( l(q^n) - l(q)\right) \int_\M K(x,y)dy \longrightarrow 0
  \quad\textrm{uniformly in } x.
\end{equation}
For the first term on the right-hand side of \eqref{eq:u-conv}, we
substitute specifications  \eqref{eq:52b} and \eqref{eq:90} for $q^n$
and $q$, respectively, apply changes of variables, and we find that 
\begin{multline*}
  \int_\M K(x),y)(q^n(y,t) -q(y,t) )dy 
= \int_\M \left( K(x, \Phi^{n-1}_t(a)) - K(x,\Phi_t(a))\right) q_0(a)
   da +\\
 \int_0^t \int_\M \left( K(x, \Phi^{n-1}_{t-s}(b)) -
   K(x,\Phi_{t-s}(b))\right) f(b,s) db ds. 
\end{multline*}
Integrating the absolute value of the left-hand side on $\M$, and
using the relation above, we derive that 
\begin{align*}
  &\int_\M \left| \int_\M K(x),y)(q^n(y,t) -q(y,t) )dy\right| dx \\
\leq & |q_0|_\infty \int_\M \int_\M \left| K(x, \Phi^{n-1}_t(a)) -
       K(x,\Phi_t(a))\right| da dx  + \\
 &|f|_\infty \int_0^t \int_\M \int_\M \left| K(x, \Phi^{n-1}_{t-s}(b)) -
   K(x,\Phi_{t-s}(b))\right| dx\, db\, ds\\
\leq & C(|q_0|_\infty, |f|_\infty, \M) \left( \int_\M \chi(\delta^{n-1}_t(a)) da 
  +  \int_0^t \int_\M \chi(\delta^{n-1}_{t-s} (b)) db\, ds\right).
\end{align*}
The function $\delta^n_s$ is defined in \eqref{eq:72a}. Using the
bound \eqref{eq:57a} on $\chi$, we derive from the above that 
\begin{align*}
  &\int_\M \left| \int_\M K(x),y)(q^n(y,t) -q(y,t) )dy\right| dx \\
\leq & C(|q_0|_\infty, |f|_\infty, \M) \left( -\ln\epsilon \int_\M \delta^{n-1}_t(a) da 
  -\ln\epsilon \int_0^t \int_\M \delta^{n-1}_{t-s} (b)) db\, ds +
       \epsilon|\M| + \epsilon\cdot t|\M| \right).
\end{align*}
Thanks to  the uniform (in $t$) convergence \eqref{eq:89a} of
$\Phi^n_t$, it is clear that, for any $\epsilon >0$, there exists $N$
such that, for any $n>N$, 
\begin{equation}
\label{eq:93}
    \int_\M \left| \int_\M K(x),y)(q^n(y,t) -q(y,t) )dy\right| dx \leq
    C(\M, |q_0|_\infty, |f|_\infty, T) \cdot \epsilon.
\end{equation}
The constant $C$ is independent of $\epsilon$.
Combining \eqref{eq:92} and \eqref{eq:93}, we conclude that
\begin{equation}
  \label{eq:94}
  \ub^n(x,t) \longrightarrow \ub(x,t) \qquad\textrm{strongly in }
  L^\infty(0, T; L^1(\M)).
\end{equation}
\end{proof}

% Noticing \eqref{eq:89a} and \eqref{eq:94}, we pass to the limit in
% \eqref{eq:70} and obtain 
% \begin{equation}
% \label{eq:95}
%   \Phi_t(a) = a + \int_0^t \ub(\Phi_s(a),s)ds.
% \end{equation}
% Hence, 
% \begin{equation}
% \label{eq:96}
%   \dfrac{d}{dt}\Phi_t(a) = \ub(\Phi_t(a),t)ds\qquad \textrm{ in }
%   L^1(\M). 
% \end{equation}

It remains to show that the limit $\Phi_t(a)$ of $\Phi^n_t(a)$ is
indeed the flow mapping for the limit velocity field $\ub(x,t)$ of
$\ub^n(x,t)$, i.e.~they satisfy the IVP \eqref{eq:51} in a certain
sense. 
\begin{lemma}
  For a.e.~$t\in (0,T)$, the flow mapping $\Phi_t(a)$ and the velocity
  field $\ub$ satisfy the  following relation,
%   \begin{subequations}
%     \label{eq:ivp}
%     \begin{align}
%     \dfrac{d}{dt}\Phi_t(a) &= \ub^n(\Phi_t(a), t),
%     \qquad\textrm{in } L^1(\M),\label{ivp-1}\\
%     \Phi_0(a) &= a.\label{ivp-2}
%     \end{align}
%   \end{subequations}
% Equivalently, 
\begin{equation}
  \label{eq:95}
    \Phi_t(a) = a + \int_0^t \ub(\Phi_s(a),s)ds \qquad \textrm{ in } L^1(\M).
\end{equation}
\end{lemma}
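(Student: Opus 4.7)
The plan is to pass to the limit in the integral form of the flow equation for each iterate,
\[
\Phi^n_t(a) = a + \int_0^t \ub^n(\Phi^n_s(a),s)\,ds.
\]
The left-hand side converges to $\Phi_t(a)$ strongly in $L^\infty(0,T;L^1(\M))$ by the previous lemma, so the entire question reduces to showing that
\[
\int_0^t \ub^n(\Phi^n_s(a),s)\,ds \longrightarrow \int_0^t \ub(\Phi_s(a),s)\,ds \quad \text{in } L^\infty(0,T;L^1(\M)).
\]

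To analyze the right-hand side, I would split the integrand into two pieces,
\[
\ub^n(\Phi^n_s(a),s) - \ub(\Phi_s(a),s) = \bigl[\ub^n(\Phi^n_s(a),s) - \ub(\Phi^n_s(a),s)\bigr] + \bigl[\ub(\Phi^n_s(a),s) - \ub(\Phi_s(a),s)\bigr],
\]
and handle each piece with a different tool. For the first bracket, I would use the fact that $\ub^n$ is divergence free and therefore $\Phi^n_s$ is area preserving (Liouville's theorem), so the change of variable $x = \Phi^n_s(a)$ gives
\[
\int_\M \bigl|\ub^n(\Phi^n_s(a),s) - \ub(\Phi^n_s(a),s)\bigr|\,da = \int_\M |\ub^n(x,s) - \ub(x,s)|\,dx,
\]
which tends to zero uniformly in $s$ by the strong $L^\infty(0,T;L^1(\M))$ convergence of $\ub^n$ to $\ub$ established in the previous lemma. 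For the second bracket, the crucial regularity is the quasi-Lipschitz continuity of $\nabla\psi$ supplied by Lemma \ref{lem:elliptic-reg}, which gives, pointwise in $a$ and $s$,
\[
\bigl|\ub(\Phi^n_s(a),s) - \ub(\Phi_s(a),s)\bigr| \leq C(\M,|q|_\infty)\,\chi\bigl(|\Phi^n_s(a) - \Phi_s(a)|\bigr).
\]
Integrating over $\M$ and invoking the concavity of $\chi$ via Jensen's inequality, exactly as in \eqref{eq:80}, bounds this by $C|\M|\,\chi(\rho^n_s)$, where $\rho^n_s$ is the average displacement defined analogously to \eqref{eq:79}, now measuring the gap between $\Phi^n_s$ and the limit $\Phi_s$; the same geometric-convergence argument behind \eqref{eq:89} yields $\rho^n_s \to 0$ uniformly in $s\in[0,T]$.

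Combining the two estimates and integrating in $s$ from $0$ to $t$, both contributions vanish uniformly in $t\in[0,T]$, and passage to the limit in the integral identity gives \eqref{eq:95}. The main technical obstacle is the handling of the second bracket: $\chi$ is only a modulus of continuity, not Lipschitz, so I cannot simply pull out an $L^\infty$ norm, and I must rely on the concavity of $\chi$ together with the continuity of $\chi$ at the origin to convert uniform-in-$s$ convergence of $\rho^n_s$ to zero into uniform-in-$s$ convergence of $\chi(\rho^n_s)$ to zero. Once this is in place, dominated convergence (with uniform bound by $\chi(\mathrm{diam}\,\M)$) closes the argument and delivers the claim in $L^1(\M)$, uniformly in $t$.
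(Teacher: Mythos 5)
Your proof is correct, but your decomposition is the mirror image of the one in the paper. The paper splits
\[
\ub^n(\Phi^n_s(a),s) - \ub(\Phi_s(a),s) = \bigl[\ub^n(\Phi^n_s(a),s) - \ub^n(\Phi_s(a),s)\bigr] + \bigl[\ub^n(\Phi_s(a),s) - \ub(\Phi_s(a),s)\bigr],
\]
applying the quasi-Lipschitz estimate to $\ub^n$ (with constants uniform in $n$, since $|q^n|_\infty$ is bounded uniformly) and the area-preservation property to the \emph{limit} map $\Phi_s$. You instead insert $\ub(\Phi^n_s(a),s)$ as the middle term, so that area preservation is invoked for the \emph{iterates} $\Phi^n_s$ and quasi-Lipschitz continuity is invoked for the \emph{limit} $\ub$. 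Both routes close: yours requires that the limit $q$ lies in $L^\infty(Q_T)$ so that Lemma \ref{lem:elliptic-reg} gives a quasi-Lipschitz modulus for $\ub$, which is stated in the paper just before \eqref{eq:90a}; the paper's requires that the limit map $\Phi_s$ preserve Lebesgue measure, a fact the paper leans on implicitly (it follows from the area preservation of $\Phi^n_s$ and a.e.~convergence along a subsequence, but is not spelled out). In that sense your version is slightly more self-contained: the area preservation of each $\Phi^n_s$ is an immediate consequence of Liouville's theorem applied to the divergence-free fields $\ub^n$, which the paper already states, whereas measure preservation of the limit needs an extra (if routine) limiting argument. One stylistic note: the paper's equation \eqref{eq:80} says ``convexity'' of $\chi$, but as you correctly observe, the inequality used there is Jensen's for a \emph{concave} function ($\chi'' = -1/\delta < 0$ on $(0,1)$), so your wording is the accurate one.
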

\begin{proof}
 Formally, equation \eqref{eq:95} is the limit of the integral
 equation \eqref{eq:70}. It has been shown that $\Phi^n_t$ converges
 to $\Phi_t$ in $L^1(\M)$, uniformly for $t\in (0, T)$. We only need
 to show that the integral on the right-hand side of \eqref{eq:70}
 converges to the integral on the right-hand side of \eqref{eq:95} in
 $L^1(\M)$ as well. To this end, we evaluate the $L^1$-norm of the
 difference of these two integrals, 
 \begin{multline*}
   \int_\M \left| \int_0^t \left(\ub^n(\Phi^n_s(a), s) -
   \ub(\Phi_s(a), s)\right) ds \right| da \leq \\
  \int_\M  \int_0^t \left|\ub^n(\Phi^n_s(a), s) -
   \ub^n(\Phi_s(a), s)\right| ds  da + \\
  \int_\M  \int_0^t \left|\ub^n(\Phi_s(a), s) -
   \ub(\Phi_s(a), s)\right| ds da.
 \end{multline*}
By Lemma \ref{lem:elliptic-reg}, each $\ub^n$ is quasi-Lipschitz
continuity, and the continuity parameter depends on $\M$,
$|q_0|_\infty$ and $|f|_\infty$ only, and not on $n$. Therefore, we
can write that 
 \begin{align*}
   &\int_\M \left| \int_0^t \left(\ub^n(\Phi^n_s(a), s) -
   \ub(\Phi_s(a), s)\right) ds \right| da \\
  \leq& C(\M, |q_0|_\infty) \int_0^t \int_\M\chi(|\Phi^n_s(a) -
   \Phi_s(a)| ds  da +
  \int_0^t \left|\ub^n(\cdot, s) -
   \ub(\cdot, s)\right|_{L^1(\M)}  ds\\
  \leq& C(\M, |q_0|_\infty) |\M| \int_0^t \chi\left(\dfrac{|\Phi^n_s -
   \Phi_s|_{L^1(\M)}}{|\M|} \right)ds  +
  \int_0^t \left|\ub^n(\cdot, s) -
   \ub(\cdot, s)\right|_{L^1(\M)}  ds
 \end{align*}
The convexity of the scalar function $\chi(\cdot)$ has been used in
deriving the last estimate. By the continuity of the function
$\chi(\cdot)$, the $L^1$ convergence of $\Phi^n$ and $\ub^n$, we
conclude that the above expression goes to zero as $n$ goes to
infinity, uniformly in $t\in (0, T)$. The claim is thus proven. 
\end{proof}

% Assume this Lemma for now, and assume that the IVP problem has a
% global unique solution $\Phi_t(\cdot)$. We then define a new QGPV $q'$
% from the given initial state $q_0$ via
% \begin{equation}
%   \label{eq:53}
%   q'(x,t) = q_0(\Phi_{-t}(x)) + \int_0^t f(\Phi_{s-t}(x),s)ds,
%   \quad \forall x\in\M,\,t>0,
% \end{equation}
% or equivalently, 
% \begin{equation}
%   \label{eq:53a}
%   q'(\Phi_t(a),t) = q_0(a) + \int_0^t f(\Phi_{s}(a),s)ds,
%   \quad \forall a\in\M,\,t>0.
% \end{equation}

% We now show that $q(x,t)$ defined in \eqref{eq:90} satisfies the weak
% formulation. 
It is straightforward to verify that, if everything is smooth, then
the QGPV $q$ solves the
transport equation. Indeed, 
\begin{align*}
\dfrac{\p}{\p t} q + \ub\cdot\nabla q
&= \dfrac{\p}{\p t} q + \nabla q\cdot \dfrac{d}{dt}\Phi_t(a)\\
 &=  \dfrac{d}{d t} q(\Phi_t(a), t) \\
  &= f(\Phi_t(a), t).
\end{align*}
However, in general, $\Phi_t$ and $\ub$ do not satisfy \eqref{eq:51a}
in the classical 
sense, and the so-defined QGPV $q$ is not necessarily 
differentiable in time. We now show that  $q$ 
satisfies the transport equation in a weaker
sense. Indeed, using the change of variable $x = \Phi_t(a)$,
we verify that $q(x,t)$ satisfies the weak
formulation \eqref{eq:17}, 
\begin{align*}
  &\int_0^T\int_\M q(x,t)\left(\dfrac{\p\varphi}{\p t} +
    \ub\cdot\nabla\varphi\right) dxdt \\
  =& \int_0^T\int_\M \left(q_0(a) + \int_0^t
    f(\Phi_s(a),s)ds\right) \dfrac{d}{dt}\varphi(\Phi_t(a),t)dadt \\
  =& -\int_\M q_0(a)\varphi(a,0)da - \int_0^T\int_\M
  f(\Phi_t(a),t)\varphi(\Phi_t(a),t)dadt \\
  =& -\int_\M q_0(x)\varphi(x,0)dx - \int_0^T\int_\M
  f(x,t)\varphi(x,t) dx dt.
\end{align*}
The process goes through thanks to the fact that the mapping $x =
\Phi_t(a)$ preserves area. Thus we have proven the following result.

\begin{theorem}\label{thm-existence}
Assume that $f\in L^\infty(0,T; C(\M))$. Then there exists a solution
to the weak problem \eqref{problem} in 
  $L^\infty(0,T; V)$. 
\end{theorem}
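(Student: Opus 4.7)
My plan is to assemble the convergences established in the preceding lemmas into a clean existence statement, using a Lagrangian construction. The candidate solution is obtained by taking $q$ to be the limiting potential vorticity defined in \eqref{eq:90} and letting $\psi(\cdot,t)$ be the unique solution of the non-standard elliptic BVP \eqref{eq:9} with right-hand side $q(\cdot,t)-y$ at each time slice.

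First I would verify that $q\in L^\infty(Q_T)$. Since each $\ub^n$ is divergence free, Liouville's theorem gives that each iterate $\Phi^n_t$ preserves Lebesgue measure; this property passes to the limit $\Phi_t$ in the weak sense needed for the change of variable, so the Lagrangian formula \eqref{eq:90} yields
\begin{equation*}
|q|_\infty \leq |q_0|_\infty + T\,|f|_\infty.
\end{equation*}
Solving \eqref{eq:9} at each $t$ then produces $\psi(\cdot,t)$, and the identity
\begin{equation*}
\|\psi(\cdot,t)\|_V = \|\Delta\psi - \psi\|_{L^\infty(\M)} = \|q(\cdot,t) - y\|_{L^\infty(\M)}
\end{equation*}
gives $\psi\in L^\infty(0,T;V)$ immediately. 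By construction the associated velocity $\nabla^\perp\psi$ coincides with the limit $\ub$ from the preceding lemma.

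It then remains to verify the weak formulation \eqref{eq:17}. Substituting $\Delta\psi - \psi = q-y$, $\Delta\psi + y - \psi = q$, $\Delta\psi_0 - \psi_0 = q_0-y$, and $\nabla^\perp\psi = \ub$, and using the fact that $-\int_0^T\!\int_\M y\,\p_t\varphi\,dx\,dt = \int_\M y\,\varphi(x,0)\,dx$ because $\varphi|_{t=T}=0$, \eqref{eq:17} reduces to
\begin{equation*}
\int_0^T\!\!\int_\M q\bigl(\p_t\varphi + \ub\cdot\nabla\varphi\bigr)\,dx\,dt = -\int_\M q_0(x)\varphi(x,0)\,dx - \int_0^T\!\!\int_\M f\varphi\,dx\,dt,
\end{equation*}
which is exactly the identity already derived via the substitution $x=\Phi_t(a)$ in the paragraph just before the theorem statement.

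The one delicate step, which is largely handled by the earlier work, is rigorously justifying the change of variable $x=\Phi_t(a)$, since $\Phi_t$ is only produced through an $L^\infty(0,T;L^1(\M))$ limit and need not be a classical diffeomorphism. The area-preservation inherited from the iterates is enough to make every integral transform correctly in the Lebesgue sense, which is all that the Lagrangian identity requires. Once this is in place, steps one through three assemble into the existence of $\psi\in L^\infty(0,T;V)$ solving \eqref{problem}, completing the proof.
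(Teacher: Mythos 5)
Your proposal is correct and mirrors the paper's own route: both define $q$ by the Lagrangian formula \eqref{eq:90}, recover $\psi$ from the non-standard elliptic BVP, note the cancellation of the $y$-terms, and verify \eqref{eq:17} by the area-preserving change of variable $x=\Phi_t(a)$. The only caveat you might spell out more carefully — as the paper also glosses — is that measure-preservation of the limit map $\Phi_t$ must be inherited from the iterates $\Phi^n_t$ through the $L^\infty(0,T;L^1(\M))$ convergence, but that is the same gap in both treatments.
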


In the previous section, we have shown that the initial and boundary
conditions are only satisfied in a weak sense. We will now show that
the solution $\psi$ actually enjoys better regularity, and satisfies
the initial and boundary conditions in the classical sense. 
\begin{theorem}\label{thm:reg}
  The initial and boundary conditions \eqref{eq:6}--\eqref{eq:8} are
  satisfied  in the classical sense, and $\Delta\psi$, $\p^2\psi/\p
  x\p t$ are strongly continuous with respect to $t$ on $[0,T]$ in
  $\lpm$ for any $p>1$. 
\end{theorem}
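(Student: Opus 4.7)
The plan is to first strengthen the time continuity of the potential vorticity $q$ from $L^\infty(Q_T)$ to $C([0,T]; L^p(\M))$, then transfer that continuity to $\psi$, $\Delta\psi$, and $\p^2\psi/\p x\p t$ through the elliptic regularity results already in place. Once $\psi(\cdot,t)$ is known to be continuous in $W^{2,p}(\M)$ for sufficiently large $p$, the Sobolev embedding $W^{2,p}\hookrightarrow C(\overline{\M})$ will promote the weak identifications of Lemma \ref{lem:weak-cont} to pointwise classical statements.

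\textbf{Step 1: $q\in C([0,T]; L^p(\M))$ for every $1<p<\infty$.}
Starting from the Lagrangian formula \eqref{eq:90}, the uniform boundedness of $\ub$ makes $\Phi_t(a)$ Lipschitz in $t$ uniformly in $a\in\M$, and $\Phi_t$ is area-preserving. For continuous $q_0$ and $f$, pointwise continuity of $t\mapsto q_0(\Phi_{-t}(x))$ together with dominated convergence immediately gives $L^p$ continuity. For a general $q_0\in L^\infty(\M)$ I would approximate in $L^p$ by continuous $q_0^{(k)}$; since $g\mapsto g\circ\Phi_{-t}$ is an $L^p$-isometry (by area preservation), the approximation error is uniform in $t$, and a triangle inequality closes the argument. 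The forcing contribution is handled in the same fashion.

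\textbf{Step 2: $\psi\in C([0,T]; W^{2,p}(\M))$.}
By Step 1, $q-y$ is continuous in $L^p$, and the boundary constant $l(q(\cdot,t))$ given by \eqref{eq:13d} is continuous in $t$ because the kernels $\int_\M G(x,y)\,dy$ and $\int_\M G(x,y)\,dx$ lie in $L^\infty$, making $q\mapsto l(q)$ bounded and linear on $L^1(\M)$. The difference $\psi(\cdot,t_1)-\psi(\cdot,t_2)$ solves a BVP of the form \eqref{eq:9} with right-hand side $q(\cdot,t_1)-q(\cdot,t_2)$ and boundary constant $l(q(\cdot,t_1))-l(q(\cdot,t_2))$; classical $L^p$ elliptic regularity (\cite{Gilbarg1983-pq}) then yields strong continuity of $\psi$ in $W^{2,p}(\M)$, and hence of $\Delta\psi$ in $L^p(\M)$.

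\textbf{Step 3: $\p_t\psi\in C([0,T]; W^{1,p}(\M))$.}
I would apply Lemma \ref{lem:elliptic-reg1} to the time difference of the identity \eqref{eq:29}. The right-hand side is in divergence form: by Step 2 the factor $\nabla^\perp\psi$ is continuous in $L^p$, and $\Delta\psi+y-\psi=q$ is continuous in $L^p$ while uniformly bounded in $L^\infty$, so their product is continuous in $L^p$. The forcing term $F$ is continuous in $t$ by hypothesis. Lemma \ref{lem:elliptic-reg1} then delivers continuity of $\p_t\psi$ in $W^{1,p}(\M)$, which is exactly the claim for $\p^2\psi/\p x\p t$.

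\textbf{Step 4: classical initial and boundary conditions.}
At $t=0$, formula \eqref{eq:90} gives $q(\cdot,0)=q_0$ almost everywhere, so $\psi(\cdot,0)$ solves \eqref{eq:9} with data $q_0$, and by uniqueness it equals $\psi_0$ in $W^{2,p}(\M)$. Choosing $p>2$ and invoking $W^{2,p}\hookrightarrow C(\overline{\M})$ upgrades this to pointwise equality, so \eqref{eq:8} is satisfied classically. The same embedding makes the trace $\psi(\cdot,t)|_{\p\M}=l(q(\cdot,t))$ well-defined and continuous in $(x,t)$, so \eqref{eq:6} holds pointwise for every $t$, and the no-flux \eqref{eq:5} follows. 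The main obstacle I expect is Step 1: the Lagrangian representation only gives $q\in L^\infty(Q_T)$ with no obvious temporal modulus of continuity, and upgrading this to strong $L^p$ continuity requires exploiting both the uniform Lipschitz dependence of $\Phi$ on $t$ and its area preservation, together with a density approximation for $L^\infty$ initial data. Steps 2--4 are then mostly mechanical applications of the elliptic regularity lemmas already established.
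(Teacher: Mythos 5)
Your overall plan coincides with the paper's: establish $q\in C([0,T];L^p(\M))$ first, then push this continuity through the elliptic solver to get $\psi\in C([0,T];W^{2,p})$, $\p_t\nabla\psi\in C([0,T];L^p)$, and finally invoke a Sobolev embedding to show the initial and boundary conditions hold classically. Steps 2--4 are essentially identical to the paper's. The genuine difference is how you obtain the key Step 1. The paper's route is functional-analytic: it uses the weak formulation of the PDE to show $q(\cdot,\tau_2)\rightharpoonup q(\cdot,\tau_1)$ in $L^p$, uses the Lagrangian formula plus area preservation to show $\overline{\lim}_{\tau_2\to\tau_1}\|q(\cdot,\tau_2)\|_{\lpm}\le\|q(\cdot,\tau_1)\|_{\lpm}$, and then invokes the Radon--Riesz theorem (weak convergence plus norm convergence in the uniformly convex space $L^p$ implies strong convergence). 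You instead argue purely in the Lagrangian picture: for continuous data the composition $t\mapsto q_0(\Phi_{-t}(x))$ converges pointwise, dominated convergence gives $L^p$ continuity, and for general $L^\infty$ data you approximate in $L^p$ by continuous functions and use the $L^p$-isometry of $g\mapsto g\circ\Phi_{-t}$. Your route is more elementary (no Radon--Riesz, no appeal to uniform convexity) and entirely Lagrangian, whereas the paper's proof draws on both the weak and Lagrangian formulations. Both are correct.

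One small technical point worth tightening in your Step 1: the uniform bound on $\ub$ gives Lipschitz dependence in $t$ for the \emph{forward} flow $\Phi_t(a)$, but your argument actually needs continuity in $t$ of the \emph{backward} flow $\Phi_{-t}(x)$ at fixed $x$. Because the velocity field is only quasi-Lipschitz in space, one cannot conclude that the backward flow is Lipschitz in $t$; the correct statement is that it is continuous (since $(t,a)\mapsto(t,\Phi_t(a))$ is a homeomorphism of the compact set $[0,T]\times\overline\M$, its inverse is jointly continuous, giving continuity of $t\mapsto\Phi_{-t}(x)$). Continuity is all you actually use, so the argument survives, but the phrase ``Lipschitz dependence of $\Phi$ on $t$'' should be replaced by ``joint continuity of the flow and its inverse'' when speaking of the backward map.
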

\begin{proof}
  We let $\varphi\in \mathcal{C}^\infty_c(\M)$. We multiply
  \eqref{eq:4} by $\varphi$ and integrate over
  $\M\times[\tau_1,\,\tau_2]$ for some $0\leq \tau_1<\tau_2\leq T$,
  \begin{multline*}
    (\Delta\psi - \psi,\,\varphi)|_{t=\tau_2} - (\Delta\psi -
    \psi,\,\varphi)|_{t = \tau_1} - \int_{\tau_1}^{\tau_2} \int_\M
    (\Delta\psi + y - \psi)\nabla^\perp\psi \cdot \nabla\varphi dx dt
    \\ =
    \int_{\tau_1}^{\tau_2} \int_\M f(x,t)\varphi(x) dx,
  \end{multline*}
  \begin{equation*}
    (q(\cdot,\tau_2),\,\varphi) - (q(\cdot,\tau_1),\,\varphi) =
    \int_{\tau_1}^{\tau_2} \int_\M 
     q \nabla^\perp\psi \cdot \nabla\varphi + f\varphi dx dt.
  \end{equation*}
We note that $\psi\in L^\infty(0,T; \,V)$, $q$ is bounded in
$L^\infty(Q_T)$, and $\nabla^\perp\psi$ is uniformly bounded in
$Q_T$. Thus, as $\tau_2 \longrightarrow \tau_1$,
\begin{equation}
  \label{eq:64}
  q(\cdot,\tau_2) \rightharpoonup q(\cdot,\tau_1)\qquad\textrm{in any }
  L^p(\M). 
\end{equation}
Writing \eqref{eq:90} over the interval $[\tau_1, \tau_2]$, one can
easily derive that, for $\forall\, p>1$, 
%Similar to \eqref{eq:62}, one can derive that, for $\forall\,p>1$, 
\begin{equation*}
  |q(\cdot,\tau_2)|_{\lpm}\leq |q(\cdot,\tau_1)|_\lpm +
  \int_{\tau_1}^{\tau_2} |f(\cdot,t)|_\lpm dt.
\end{equation*}
From this estimate we conclude that 
\begin{equation*}
  \overline\lim_{\tau_2\rightarrow\tau_1} |q(\cdot,\tau_2)|_\lpm \leq
  |q(\cdot,\tau_1)|_\lpm. 
\end{equation*}
In view of this estimate and the weak convergence \eqref{eq:64}, the
Radon-Riesz theorem applies, and we have
\begin{equation}
  \label{eq:q-cont}
  q(\cdot, t) \in C([0,T], L^p(\M)),\qquad \forall p>1.
\end{equation}
% By the Radon-Riesz theorem, $q(\cdot,\tau_2)$ converges to
% $q(\cdot,\tau_1)$, as $\tau_2\rightarrow\tau_1$, in the strong norm of
% $\lpm$. Hence, $q(\cdot,t)$ is continuous in $t$ in any $\lpm$, which
% implies that the initial condition \eqref{eq:8} is satisfied in a
% stronger norm,
% \begin{equation}
%   \label{eq:65}
%   q(\cdot,0) = q_0(\cdot) \qquad \lpm,\,\,\forall\, p> 1.
% \end{equation}

Concerning the continuity of $\p^2/\p x\p t$, we rewrite \eqref{eq:4}
as 
\begin{equation*}
    (\Delta - I) \dfrac{\p}{\p t}\psi =  \nabla\times F -
    \nabla\cdot\left(\nabla^\perp\psi(\Delta\psi + y - \psi)\right). 
\end{equation*}
Then,
\begin{equation*}
    \nabla \dfrac{\p}{\p t}\psi =  \nabla(\Delta-I)^{-1}\nabla\times F -
    \nabla(\Delta -
    I)^{-1}\nabla\cdot\left(\nabla^\perp\psi(\Delta\psi + y -
      \psi)\right),  
\end{equation*}
where $(\Delta - I)^{-1}$ is the solution operator of the elliptic
boundary value problem \eqref{eq:9}. We note that $q = \Delta\psi + y
- \psi$ is continuous in $t$ in any $\lpm$ with $p>1$, and
$\nabla^\perp\psi$ is uniformly bounded in $Q_T$. Thus, thanks to the
continuity of the differential operator $\nabla(\Delta -
I)^{-1}\nabla(\cdot)$, and provided that $F$ is continuous in $t$ as
well, 
\begin{equation}
  \label{eq:psi_x_t}
\nabla\frac{\p\psi}{\p t}(\cdot, t) \in C([0,T];
L^p(\M)),\qquad\forall\, p>1. 
\end{equation}

By Lemma \ref{lem:elliptic-reg},
\begin{equation*}
  |\psi(\cdot,\tau_2) - \psi(\cdot,\tau_1)|_{W^{2,p}(\M)} \leq Cp
  |q(\cdot,\tau_2) - q(\cdot,\tau_1) |_\lpm. 
\end{equation*}
Thus, as $\tau_2\longrightarrow \tau_1$, 
\begin{equation*}
  \psi(\cdot,\tau_2) \longrightarrow \psi(\cdot,\tau_1)\qquad
  \textrm{in } W^{2,p}(\M),
\end{equation*}
Thus the initial condition \eqref{eq:8} is satisfied in a stronger norm,
\begin{equation*}
  \psi(\cdot,0) = \psi_0(\cdot)\qquad\textrm{ in } W^{2,p}(\M). 
\end{equation*}

We also  note that $\psi\in L^\infty(0,T; V)$ implies that
\begin{equation}
  \label{eq:67}
  \dfrac{\p\psi}{\p x} \in L^\infty(0,T; W^{1,p}(\M)).
\end{equation}
From Lemma \ref{lem-t-deriv}, we have
\begin{equation}
  \label{eq:68}
  \dfrac{\p^2\psi}{\p t\p x} \in L^\infty(0,T; \lpm) \subset L^p(Q_T).
\end{equation}
Combining \eqref{eq:67} and \eqref{eq:68}, we derive that 
\begin{equation}
  \label{eq:69}
  \dfrac{\p\psi}{\p x} \in W^{1,p}(Q_T),\qquad\forall\, p> 1. 
\end{equation}
We take a $p > 3$. Then, by the Sobolev imbedding theorem,
\begin{equation}\label{eq:69a}
  \dfrac{\p\psi}{\p x} \in C^{0,\lambda} (Q_T)\qquad \textrm{for some
  } 0 < \lambda < 1.
\end{equation}
Thus, the streamfunction $\psi$ is continuous in the spatial-temporal
domain, and the initial and boundary conditions are satisfied in the
classical sense. 

Finally, \eqref{eq:q-cont} combined with \eqref{eq:69a} implies that 
\begin{equation}
  \label{eq:delpsi-cont}
  \Delta \psi \in C([0,T]; L^p(\M)).
\end{equation}
\end{proof}

We point out that, thanks to \eqref{eq:q-cont}, the QGPV $q$ assumes
its initial value $q_0$ in the $L^p$-norm, for any $p>1$, which is an
improvement over \eqref{wc-3}. 

\section{Concluding remarks}
So far, theoretical studies of geophysical models have largely focused
on those with ``rigid'' lids on the top. Models with free or
deformable top surfaces are much harder, and belong to the class of
free boundary problems. Results on such problems are still scarce. The
few published results concern the local existence and uniqueness for
the viscous PEs with a free top surface (\cite{Honda2012-zh,
  Honda2015-rc}). This is not surprising. When a top surface is left
free, it may break, as it does in reality. The current work deals with
the inviscid barotropic QG equation. The free top surface enters the
dynamics through its effect on the QGPV, thanks to Kelvin's
Circulation Theorem. The current work confirms that, when the free
surface is included in this way, the QG model remains globally
well-posed. 

The current work is part of a project to address the well-posedness of
invicid QG equations. The other QG models that are being or will be
considered include the multi-layer QG model and the three-dimensional
QG model. Within the multi-layer QG model, besides the top surface,
the interior layer interfaces are also free to deform. Physically, the
deformation of the interior interfaces can be much more significant
than those of the top surface, thanks to the reduced gravity in the
interior of the fluid (\cite{Pedlosky1987-gk}). A mathematical
challenge posed by the multi-layer QG model is the fact that the
linear differential operator in the QGPV specification is not negative
definite, a departure from the barotropic case. 
% Detailed analysis of
% this problem will be presented in an upcoming article. 
The three-dimensional QG equations are posed on a truly
three-dimensional domain, but its velocity field remains
horizontal. Hence this system is much more complex than the barotropic
or multi-layer QG equations, but are notably simpler than
 fully three-dimensional fluid models, including the
Navier-Stokes/Euler equations, and the primitive equatons. 
These problems will be addressed in forthcoming papers. 

%  has been studied by several authors
% already, all in rather idealized settings with periodic boundary
% conditions (\cite{Dutton1974-oo, Dutton1976-kh, Bourgeois1994-tv,
%   Desjardins1998-hb}) or on an unbounded domain (\cite{Puel2015-mw,
%   Novack2016-op}). One of the main goals of the present project is to
% explore the well-posedness of the three-dimensional QG equation in a
% general bounded domain with no friction. 

% \section*{Acknowledgments}
% The author acknowledges helpful discussions with Roger Temam. 

\appendix
\section{Proof of Lemma \ref{lem:flow}}\label{ap-1}
The proof is by the Picard iteration technique. For the moment, we
drop the super index from \eqref{eq:51}, and write
the initial boundary
value problem in the integral form,
\begin{equation}
  \label{eq:54}
  \Phi_t(a) = a + \int_0^t \ub(\Phi_s(a),s)ds.
\end{equation}
We let 
\begin{equation}
  \label{eq:55}
  \Phi^0_t(a) = a, \qquad t\ge 0.
\end{equation}
Assuming that $\Phi^{k-1}_t(a)$ is known, we compute $\Phi^k_t$ by
\begin{equation}
  \label{eq:56}
\Phi^k_t(a) = a + \int_0^t \ub(\Phi^{k-1}_s(a),s)ds.  
\end{equation}
When $n=1$, using \eqref{eq:55} and \eqref{eq:56}, and the uniform
boundedness of $\ub$, we obtain
\begin{equation}
  \label{eq:57}
  |\Phi^1_t(a) - \Phi^0_t(a)| \leq \int_0^t |
  \ub(\Phi^{k-1}_s(a),s)|ds \leq Ct. 
\end{equation}
We estimate the difference $\Phi^k_t(a) - \Phi^{k-1}_t(a)$ using
\eqref{eq:56} and the quasi-Lipschitz condition on $\ub$,
\begin{align*}
  |\Phi^k_t(a) - \Phi^{k-1}_t(a)| 
   &\leq \int_0^t |\ub(\Phi^{k-1}_s(a),s) -
     \ub(\Phi^{k-2}_s(a),s)| ds\\
  & \leq C\int_0^t \chi\left(|\Phi^{k-1}_s(a) - \Phi^{k-2}_s(a)|\right) ds.
\end{align*}
Here, $C$ is, by assumption, independent of $a$ or $t$, and the
scalar function $\chi$ is defined in Lemma \ref{lem:elliptic-reg}. We
note that, for an arbitrary $0 < \epsilon < 1$, this scalar function
is bounded by 
\begin{equation}
\label{eq:57a}
  \chi(r) \leq -\ln\epsilon\cdot r + \epsilon,\qquad\forall\, r\geq 0.
\end{equation}
Hence, we have
\begin{equation}
  \label{eq:58}
  |\Phi^k_t(a) - \Phi^{k-1}_t(a)| \leq -C\ln\epsilon \int_0^t
  |\Phi^{k-1}_s(a) - \Phi^{k-2}_s(a)|ds + C\epsilon t.  
\end{equation}
Using \eqref{eq:57}, we find that
\begin{equation*}
    |\Phi^2_t(a) - \Phi^1_t(a)| \leq -\ln\epsilon\cdot
    C^2\dfrac{t^2}{2} + C\epsilon t.
\end{equation*}
Then, by induction, we find that
\begin{equation}
  \label{eq:59}
    |\Phi^k_t(a) - \Phi^{k-1}_t(a)| \leq C^k
    \left(-\ln\epsilon\right)^{k-1} \dfrac{t^k}{n!} +
    \epsilon\sum_{k=1}^{k-1} C^k\left(-\ln\epsilon\right)^{k-1}
      \dfrac{t^k}{k!}. 
\end{equation}
The summation on the right-hand side can be bounded by an exponential
function,
\begin{equation*}
      \epsilon\sum_{k=1}^{k-1} C^k\left(-\ln\epsilon\right)^{k-1}
      \dfrac{t^k}{k!} \leq C\epsilon t e^{-C\ln\epsilon t} = Ct\epsilon^{1-Ct}.
\end{equation*}
Thus,
\begin{equation}
  \label{eq:60}
    |\Phi^k_t(a) - \Phi^{k-1}_t(a)| \leq
    Ct\dfrac{(-C\ln\epsilon\cdot t)^{k-1}}{n!} + Ct\epsilon^{1-Ct}. 
\end{equation}
This estimate holds for arbitrary $0 < \epsilon < 1$. We take
$\epsilon = e^{-k}$. We derive from \eqref{eq:60} that
\begin{equation*}
      |\Phi^k_t(a) - \Phi^{k-1}_t(a)| \leq C^k\cdot
      n^{k-1}\dfrac{t^k}{n!} + Ct e^{-k(1-Ct)}. 
\end{equation*}
By the Stirling formula,
\begin{equation}
\label{eq:60a}
  \dfrac{n^{k-1}}{k!} \leq \dfrac{e^k}{\sqrt{2\pi}k^{\frac{3}{2}}}. 
\end{equation}
Thus, 
\begin{equation*}
      |\Phi^k_t(a) - \Phi^{k-1}_t(a)| \leq 
      \dfrac{(C e t)^k}{\sqrt{2\pi}k^{\frac{3}{2}}}  + Ct e^{-k(1-Ct)}. 
\end{equation*}
We choose
\begin{equation*}
  t^\ast = \dfrac{1}{2Ce}.
\end{equation*}
Then, 
\begin{equation*}
  Cet^\ast = \dfrac{1}{2} \quad \textrm{and}\quad 1-Ct^\ast \geq
  \dfrac{1}{2}, 
\end{equation*}
and, for all $0\leq t\leq t^\ast$,
\begin{equation*}
      |\Phi^k_t(a) - \Phi^{k-1}_t(a)| \leq 
 \dfrac{1}{\sqrt{2\pi}k^{\frac{3}{2}}}\left(\dfrac{1}{2}\right)^k  +
 Ct^\ast e^{-\frac{1}{2}k}.  
\end{equation*}
Thus, for $0\leq t\leq t^\ast$, $|\Phi^k_t(a) - \Phi^{k-1}_t(a)|$
is a convergent  geometric sequence, and therefore $\Phi^k_t(a)$ is
Cauchy and converges to a limit function $\Phi_t(a)$, which solves
the integral equation \eqref{eq:54} and thus the IVP \eqref{eq:51} on
the interval $[0,\,t^\ast]$. The choice of $t^\ast$ depends on the
generic constant $C$ only, and is independent of the initial position
$a$. Hence, the same procedure can be applied to extend the solution
$\Phi_t(a)$, for every $a\in\M$, to the whole time interval
$[0,\,T]$. 

For uniqueness, we assume that $\Psi_t(a)$ is another solution
satisfying 
\begin{equation}
  \label{eq:61}
  \Psi_t(a) = a + \int_0^t \ub(\Psi_s(a),s)ds,
\end{equation}
and $\Psi_t(a)$ differs from $\Phi_t(a)$ for every
$t\in(0,\,\sigma)$ for some $\sigma>0$. If this is not the case, we
can always move the initial point to where $\Phi_t(a)$ and
$\Psi_t(a)$ start to fork. Subtracting this equation from
\eqref{eq:54}, and using the quasi-Lipschitz condition on $\ub$, we
find
\begin{equation*}
  |\Phi_t(a) - \Psi_t(a)| \leq -C \ln\epsilon \int_0^t
  |\Phi_s(a) - \Psi_s(a)|ds + C\epsilon t, \quad
  \forall\,0<\epsilon < 1. 
\end{equation*}
One can obtain an estimate on the difference using the Gronwall
inequality, 
\begin{equation*}
  |\Phi_t(a) - \Psi_t(a)| \leq \dfrac{\epsilon}{-\ln\epsilon}
  \left( e^{-Ct\ln\epsilon} - 1\right).
\end{equation*}
This inequality holds for arbitrary $0<\epsilon<1$, and for all $t\in
[0,T]$. We take $\epsilon = e^{-k}$ for some integer $n>0$. Then
\begin{equation*}
  |\Phi_t(a) - \Psi_t(a)|\leq \dfrac{e^{-k}}{n}\left(e^{Ctn} -
    1\right) = \dfrac{1}{n} e^{(Ct-1)n} - \dfrac{e^{-k}}{n}. 
\end{equation*}
We set $t^\ast = \dfrac{1}{2C}$. Then
\begin{equation*}
  Ct^\ast - 1 = -\dfrac{1}{2}
\end{equation*}
and for $0\leq t\leq t^\ast$,
\begin{equation*}
  |\Phi_t(a) - \Psi_t(a)|\leq  \dfrac{1}{n} e^{-\frac{1}{2}k} -
  \dfrac{e^{-k}}{k}.  
\end{equation*}
We note that this estimate holds for arbitrary $k$'s. For this to be
possible, $\Phi_t(a)$ and $\Psi_t(a)$ must agree on
$[0,\,t^\ast]$, which contradicts the assumption on $\Psi_t$. Hence
$\Phi_t(a)$ is a unique solution.

\bibliographystyle{siamplain}
\bibliography{references}
\end{document}